\documentclass[12pt]{amsart}
\usepackage{amssymb}
\usepackage{latexsym}

\newtheorem{theorem}{Theorem}[section]
\newtheorem{corollary}[theorem]{Corollary}
\newtheorem{lemma}[theorem]{Lemma}

\newtheorem{question}[theorem]{Question}

\newtheorem{historical background}[theorem]{Historical Background}

\newtheorem{remark}[theorem]{Remark}
\newtheorem{example}[theorem]{Example}
\newtheorem{notation}[theorem]{Notation}
\newtheorem{definition}[theorem]{Definition}
\newtheorem{proposition}[theorem]{Proposition}

\DeclareMathOperator\cl{\operatorname{cl}}

\def\cprime{$'$}

\def\sB {{\mathcal B}}

\def\sF {{\mathcal F}}
\def\sG {{\mathcal G}}

\def\sU {{\mathcal U}}
\def\sV {{\mathcal V}}
\def\sW {{\mathcal W}}

\def\cl {\mathrm{cl}}

\def\min {\mathrm{min}}
\def\sup {\mathrm{sup}}

\def\< {{\langle}}
\def\> {{\rangle}}

\begin{document}


\title[Generalizations of two inequalities of Hajnal and Juh\'asz]
{Generalizations of two cardinal inequalities of Hajnal and Juh\'asz}

\author{Ivan S. Gotchev}
\address{Department of Mathematical Sciences, Central Connecticut Sta\-te 
University, 1615 Stanley Street, New Britain, CT 06050}
\email{gotchevi@ccsu.edu}
\thanks{The author expresses his gratitude to the Mathematics Department at the Universidad Aut\'onoma Metropolitana, Mexico City, Mexico, for their hospitality and support during his sabbatical visit of UAM in the spring semester of 2015.}

\subjclass[2010]{Primary 54A25, 54D10}

\keywords{Hajnal-Juh\'asz inequality, cardinal function, 
(maximal) non-Hausdorff subset, non-Hausdorff number, non-Urysohn number, non-Urysohn number for singletons.}

\begin{abstract}
A non-empty subset $A$ of a topological space $X$ is called
\emph{finitely non-Hausdorff} if for every non-empty finite subset $F$ 
of $A$ and every family $\{U_x:x\in F\}$ of open neighborhoods $U_x$ of 
$x\in F$, $\cap\{U_x:x\in F\}\ne\emptyset$ and \emph{the non-Hausdorff number $nh(X)$ of $X$} is defined as follows: $nh(X):=1+\sup\{|A|:A\subset X$ is finitely 
non-Hausdorff$\}$. Clearly, if $X$ is a Hausdorff space then $nh(X)=2$.

We define the \emph{non-Urysohn number of $X$ with respect to the singletons}, $nu_s(X)$, as follows: $nu_s(X):=1+\sup\{\cl_\theta(\{x\}):x\in X\}$.

In 1967 Hajnal and Juh\'asz proved that if $X$ is a Hausdorff space then: (1) $|X|\le 2^{c(X)\chi(X)}$; and (2) $|X|\le 2^{2^{s(X)}}$; where $c(X)$ is the cellularity, $\chi(X)$ is the character and $s(X)$ is the spread of $X$.  

In this paper we generalize (1) by showing that if $X$ is a topological space then $|X|\le nh(X)^{c(X)\chi(X)}$. Immediate corollary of this result is that (1) holds true for every space $X$ for which $nh(X)\le 2^\omega$ (and even for spaces with $nh(X)\le 2^{c(X)\chi(X)}$). This gives an affirmative answer to a question posed by M. Bonanzinga in 2013. A simple example of a $T_1$, first countable, ccc-space $X$ is given such that $|X|>2^\omega$ and $|X|=nh(X)^\omega=nh(X)$. This example shows that the upper bound in our inequality is exact and that $nh(X)$ cannot be omitted (in particular, $nh(X)$ cannot always be replaced by $2$ even for $T_1$-spaces). 

In this paper we also generalize (2) by showing that if $X$ is a $T_1$-space then $|X|\le 2^{nu_s(X)\cdot 2^{s(X)}}$. It follows from our result that (2) is true for every $T_1$-space for which $nu_s(X)\le 2^{s(X)}$. A simple example shows that the presence of the cardinal function $nu_s(X)$ in our inequality is essential.
\end{abstract}

\maketitle

\section{Introduction}

Throughout this paper $\omega$ is (the cardinality of) the set of non-negative integers, $\xi$, $\eta$ and $\alpha$ are ordinals and $\kappa$ and $\tau$ are infinite cardinals. The cardinality of the set $X$ is denoted by $|X|$. By space we mean infinite topological space and for a subset $U$ of a space $X$ the closure of $U$ is denoted by $\overline{U}$.

Recall that a pairwise disjoint collection of non-empty open sets in a space $X$ is called a \emph{cellular family}. The \emph{cellularity} of $X$ is $c(X):=\sup\{|\sU|:\sU$ a cellular family in $X\}+\omega$. If $c(X)=\omega$ then it is called that $X$ satisfies the \emph{countable chain condition} (or \emph{ccc}) property. For $x\in X$ the character of $X$ at the point $X$ is $\chi(x,X):=\min\{|\mathcal{B}|:\mathcal{B}$ is a 
local base for $x\}$ and the character of $X$ is $\chi(X):=\sup\{\chi(x,X):x\in X\}$. 

In what follows, whenever $X$ is a space with $\chi(X)=\tau$ we shall assume that for each $x\in X$ a local base $\sB_x$ with $|\sB_x|\le\kappa$ has been fixed and if $A\subseteq X$ then by $\sU_A$ we shall denote the set of all families $\sU:=\{B_x:x\in A, B_x\in\sB_x\}$.

The following two definitions appeared in \cite{Got14}.

\begin{definition}\label{DIG1}
A non-empty subset $A$ of a topological space $X$  is called 
\emph{finitely non-Hausdorff} if, for every non-empty finite subset $F$ of $A$ 
and every $\mathcal{U}\in\mathcal{U}_F$, $\cap\mathcal{U}\ne\emptyset$.
The set $A$ is called a \emph{maximal finitely non-Hausdorff subset of 
$X$} if $A$ is a finitely non-Hausdorff subset of $X$ and if $B$ is a finitely 
non-Hausdorff subset of $X$ such that $A\subset B$ then $A=B$.
\end{definition}

\begin{definition}\label{DIG2}
Let $X$ be a topological space. \emph{The non-Hausdorff number 
$nh(X)$ of $X$} is defined as follows: $nh(X):=1+\sup\{|A|:A$ is a (maximal) 
finitely non-Hausdorff subset of $X\}$.
\end{definition}

M. Bonanzinga introduced in \cite{Bon13} (independently from \cite{Got14}) the notion of a \emph{Hausdorff number} of a topological space $X$, denoted $H(X)$, as follows: $H(X):=\min\{\tau:$ for every $A\subset X$ with $|A|\ge\tau$ there exist $\sU\in\sU_A$ such that $\cap\sU=\emptyset$ and she called \emph{$n$-Hausdorff} every space $X$ with $H(X)\le n$, where $n\in\omega$ and $n\ge 2$. It follows immediately from the definitions of $H(X)$ and $nh(X)$ that if $n\in\omega$ and $n\ge 2$ then $H(X)=n$ if and only if $nh(X)=n$. In the same paper Bonanzinga also introduced the notion of a \emph{weak Hausdorff number}, denoted $H^*(X)$, as follows: $H^*(X):=\min\{\tau:$ for every $A\subset X$ such that $|A|\ge \tau$ there exist $B\subset A$ with $|B|<\tau$ and $\sU\in \sU_B$ such that $\cap\sU=\emptyset\}$. She noted there that for every space $X$, $H^*(X)=H(X)$ or $H^*(X)=H(X)^+$ and constructed an example of a space $X$ such that $H^*(X)=H(X)=\omega$ (hence $H(X)\ne n$ for every $n<\omega$). It follows from the definitions that if $H^*(X)\le\omega$ then either $nh(X)=H(X)=n$ for some $n<\omega$ or $H(X)=\omega$ and $X$ is such that for every $n\in\omega$, $n\ge 2$ there is a finitely non-Hausdorff subset $A$ of $X$ with $|A|=n$ but there does not exist countably infinite finitely non-Hausdorff subset of $X$. Therefore if $H^*(X)\le\omega$ then $nh(X)\le\omega$. (Clearly, it is possible $nh(X)=\omega$ and $H^*(X)>\omega$).

In 1967, Hajnal and Juh\'asz proved that if $X$ is a  Hausdorff space then $|X|\le 2^{c(X)\chi(X)}$ (see \cite{HajJuh67}, \cite{Juh80} or \cite{Hod84}). Recently M. Bonanzinga showed that $|X|\le 2^{2^{c(X)\chi(X)}}$ whenever $X$ is a $3$-Hausdorff space (\cite[Corollary 54]{Bon13}) and she asked if the much more stronger inequality $|X|\le 2^{c(X)\chi(X)}$ holds true for every space $X$ with a finite Hausdorff number (\cite[Question 55]{Bon13}).

In this paper we prove that if $X$ is a topological space then $|X|\le nh(X)^{c(X)\chi(X)}$. Immediate corollary of this result is that the Hajnal--Juh\'asz inequality $|X|\le 2^{c(X)\chi(X)}$ holds true for every space $X$ for which $nh(X)\le 2^\omega$ (and even for spaces for which $nh(X)\le 2^{c(X)\chi(X)}$). This gives an affirmative answer of Bonanzinga's question. An Example of a $T_1$, first countable, ccc-space $X$ is given such that $|X|>2^\omega$ and $|X|=nh(X)^\omega=nh(X)$. This example shows that the upper bound in our inequality is exact and that $nh(X)$ cannot be omitted (in particular, $nh(X)$ cannot always be replaced by $2$ even for $T_1$-spaces).

\section{Some observations about $nh(X)$ and finitely non-Hausdorff subsets of $X$}

As it was noted in \cite{Got14}, it follows immediately from Definition \ref{DIG2} that $X$ is a Hausdorff space if and only if $nh(X)=2$ and $2< nh(X)\le 1+|X|$ whenever $X$ is a non-Hausdorff space. Also, if $X$ is a topological space and $A\subset X$, then $nh(A)\le nh(X)$, 
and if $X$ is an infinite set with topology generated by the open sets 
$\{X\setminus \{x\}:x\in X\}$, then $X$ is a maximal finitely non-Hausdorff 
set, and therefore $nh(X)=|X|$. 

The following three observations follow immediately from the definitions.

\begin{proposition}[\cite{Got14}]\label{P1}
In a Hausdorff space $X$ the only maximal finitely non-Hausdorff 
subsets of $X$ are the singletons.
\end{proposition}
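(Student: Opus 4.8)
The plan is to establish the equality of two collections by verifying both inclusions: every singleton is a maximal finitely non-Hausdorff subset, and every maximal finitely non-Hausdorff subset is a singleton. The whole argument rests on a single observation—that in a Hausdorff space no subset with two or more points can be finitely non-Hausdorff—so I would isolate and prove that observation first, then read off both inclusions as consequences.

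First I would confirm that each singleton $\{x\}$ is finitely non-Hausdorff. The only non-empty finite subset $F$ of $\{x\}$ is $\{x\}$ itself, and any $\sU\in\sU_F$ has the form $\{B_x\}$ with $x\in B_x\in\sB_x$; hence $\cap\sU=B_x\ni x\ne\emptyset$, so the condition of Definition \ref{DIG1} holds trivially. Next I would show that if $X$ is Hausdorff and $A\subseteq X$ has $|A|\ge 2$, then $A$ is \emph{not} finitely non-Hausdorff. Choosing distinct $x,y\in A$, the Hausdorff property supplies disjoint open neighborhoods of $x$ and $y$; shrinking them to basic neighborhoods $B_x\in\sB_x$ and $B_y\in\sB_y$ (which remain disjoint) produces, with $F=\{x,y\}$, a family $\sU\in\sU_F$ satisfying $\cap\sU=\emptyset$, contradicting the defining property of a finitely non-Hausdorff set.

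Finally I would assemble the statement. By the second step, every finitely non-Hausdorff subset has at most one element, so—being non-empty by definition—it is a singleton; in particular every maximal finitely non-Hausdorff subset is a singleton. Conversely, each singleton is finitely non-Hausdorff by the first step, and it admits no proper finitely non-Hausdorff superset (any such superset would have at least two points, excluded by the second step), so it is maximal. The only point requiring any care is the passage from the arbitrary disjoint neighborhoods furnished by the Hausdorff axiom to legitimate members of the fixed local bases $\sB_x$; this is routine, since every open neighborhood of a point contains a basic one and passing to subsets preserves disjointness.
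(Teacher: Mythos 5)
Your proof is correct, and it is precisely the argument the paper has in mind: the paper states Proposition \ref{P1} without proof, remarking that it ``follows immediately from the definitions,'' and your two-step verification (singletons are finitely non-Hausdorff; Hausdorff separation plus shrinking to basic neighborhoods kills any set with two points) is exactly that immediate argument, spelled out with appropriate care about the fixed local bases $\sB_x$.
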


\begin{proposition}[\cite{Got14}]\label{P2}
Every finitely non-Hausdorff subset of a topological space $X$ is contained in a 
maximal finitely non-Hausdorff subset of $X$. 
\end{proposition}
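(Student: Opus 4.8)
The plan is to apply Zorn's Lemma to the collection of finitely non-Hausdorff subsets of $X$ that contain the given set, partially ordered by inclusion. Let $A$ be a finitely non-Hausdorff subset of $X$, and let $\sP$ be the family of all finitely non-Hausdorff subsets $B$ of $X$ with $A\subseteq B$, ordered by $\subseteq$. Since $A\in\sP$, the poset is non-empty, so once the chain hypothesis is verified a maximal element will exist.

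The crux is to show that every chain $\sC$ in $\sP$ has an upper bound in $\sP$, and the natural candidate is $B:=\bigcup\sC$. The key point---and essentially the only point requiring an argument---is that being finitely non-Hausdorff is a property of finite character: whether $B$ has it is decided entirely by the finite subsets of $B$, and for a finite set $F$ the relevant family $\sU_F$ depends only on $F$ and the once-and-for-all fixed local bases $\sB_x$, $x\in F$, not on any ambient set. So I would take an arbitrary non-empty finite $F\subseteq B$; each of its finitely many points lies in some member of $\sC$, and since $\sC$ is totally ordered there is a single $C\in\sC$ with $F\subseteq C$. As $C$ is finitely non-Hausdorff, $\cap\sU\ne\emptyset$ for every $\sU\in\sU_F$. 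Since $F$ was arbitrary, $B$ is finitely non-Hausdorff, and clearly $A\subseteq B$; hence $B\in\sP$ is an upper bound for $\sC$.

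Zorn's Lemma then provides a maximal element $M$ of $\sP$. It remains to observe that $M$ is maximal not merely within $\sP$ but among all finitely non-Hausdorff subsets of $X$: if $M\subsetneq B'$ with $B'$ finitely non-Hausdorff, then $A\subseteq M\subseteq B'$ gives $B'\in\sP$, contradicting the maximality of $M$ in $\sP$. Thus $M$ is a maximal finitely non-Hausdorff subset of $X$ containing $A$, which is exactly what is required.

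I expect no serious obstacle: the entire content is the finite-character observation, which collapses the upper-bound verification for a chain to the single finite set $F$ sitting inside one member of the chain. One could instead invoke the Teichm\"uller--Tukey lemma directly, since ``finitely non-Hausdorff'' is by definition a property of finite character, but carrying out the Zorn argument by hand keeps the role of the fixed local bases $\sB_x$ transparent.
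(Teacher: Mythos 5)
Your proof is correct, and it is precisely the argument the paper takes for granted: the paper states Proposition \ref{P2} without proof (as one of the observations that ``follow immediately from the definitions,'' citing \cite{Got14}), and the implicit justification is exactly the Zorn's Lemma argument you spell out, resting on the fact that ``finitely non-Hausdorff subset of $X$'' is a property of finite character, so unions of chains remain finitely non-Hausdorff. Your closing remark that maximality in the restricted poset yields maximality among all finitely non-Hausdorff subsets is also the right (and needed) final observation, so there is nothing to correct.
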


\begin{proposition}\label{P3}
Every subset of a finitely non-Hausdorff subset of a space $X$ is a finitely non-Hausdorff subset of $X$.
\end{proposition}

Having in mind Propositions \ref{P1} and \ref{P3} one can easily construct an example of a $T_1$-space $X$ and subsets $A$ and $B$ of $X$ such that $A\subset B$, $A$ and $B$ are finitely non-Hausdorff subsets of $X$ but $A$ is not finitely non-Hausdorff subset of $B$ (e.g. take $B=\alpha$ in Example \ref{E2} and let $A$ be any subset of $B$ which is not a singleton).

The following two observations appeared in \cite{Got14}. Since we are going to use them later, we give them here with their proofs. 

\begin{lemma}[\cite{Got14}]\label{L1}
Let $X$ be a space and $A$ be a finitely non-Hausdorff subset of 
$X$. Then $A\subset\cap\{\overline{\cap\mathcal{U}}:\mathcal{U}\in\mathcal{U}_F, \emptyset\neq F\subset A, |F|<\omega\}$.
\end{lemma}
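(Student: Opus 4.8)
The plan is to show that an arbitrary point $a\in A$ lies in every closure $\overline{\cap\mathcal{U}}$ appearing on the right-hand side, so that $a$ belongs to the intersection over all such sets. Fix $a\in A$ and fix a non-empty finite $F\subset A$ together with some $\mathcal{U}\in\mathcal{U}_F$; our goal is to prove $a\in\overline{\cap\mathcal{U}}$. The natural move is to enlarge $F$ to $F':=F\cup\{a\}$, which is again a non-empty finite subset of $A$, and to witness that $a\in\overline{\cap\mathcal U}$ by testing an arbitrary basic neighborhood of $a$ against $\cap\mathcal U$.

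The key step is the following. Let $B_a\in\sB_a$ be any basic open neighborhood of $a$; I must show $B_a\cap(\cap\mathcal{U})\ne\emptyset$. To see this, form the family $\mathcal{U}':=\mathcal{U}\cup\{B_a\}$. Since for each $x\in F$ the set $\mathcal{U}$ assigns a basic neighborhood $B_x\in\sB_x$, and we have adjoined the basic neighborhood $B_a\in\sB_a$ of the additional point $a$, the family $\mathcal{U}'$ is exactly a member of $\mathcal{U}_{F'}$ (here one uses that $\mathcal U_{F'}$ consists of choices of one basic neighborhood at each point of $F'$; if $a$ already lay in $F$ one instead replaces its chosen neighborhood by $B_a\cap B_a'$ or simply notes $a\in\cap\mathcal U$ directly by the $F$-witness). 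Because $A$ is finitely non-Hausdorff and $F'\subset A$ is a non-empty finite subset, Definition \ref{DIG1} applied to $F'$ and $\mathcal{U}'\in\mathcal{U}_{F'}$ yields $\cap\mathcal{U}'\ne\emptyset$. But $\cap\mathcal{U}'=B_a\cap(\cap\mathcal{U})$, so $B_a\cap(\cap\mathcal U)\ne\emptyset$.

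Since $B_a$ was an arbitrary element of the fixed local base $\sB_a$ at $a$, and $\sB_a$ is a local base, every open neighborhood of $a$ meets $\cap\mathcal{U}$; hence $a\in\overline{\cap\mathcal{U}}$. As $F$ and $\mathcal{U}\in\mathcal{U}_F$ were arbitrary, $a$ lies in the intersection of all the sets $\overline{\cap\mathcal{U}}$, and since $a\in A$ was arbitrary, the claimed inclusion $A\subset\cap\{\overline{\cap\mathcal{U}}:\mathcal{U}\in\mathcal{U}_F,\ \emptyset\ne F\subset A,\ |F|<\omega\}$ follows.

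The only delicate point I anticipate is the bookkeeping around whether $a$ already belongs to $F$, i.e. making sure $\mathcal U'$ is genuinely an element of $\mathcal U_{F'}$ rather than an ad hoc enlargement; this is handled by the case distinction above and is routine. Everything else is a direct unwinding of Definition \ref{DIG1} and the definition of closure via the fixed local bases $\sB_x$.
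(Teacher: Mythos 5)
Your proof is correct and is essentially the paper's own argument run in contrapositive form: the paper supposes some $a_0\in A$ lies outside $\overline{\cap\mathcal{U}_0}$, adjoins to $\mathcal{U}_0$ a neighborhood of $a_0$ disjoint from $\cap\mathcal{U}_0$ (intersected with the already assigned neighborhood when $a_0\in F$), and gets the contradiction $\cap\mathcal{U}_1=\emptyset$, which is exactly your enlargement step phrased as a reductio. Two small blemishes in your handling of the case $a\in F$: first, $B_a\cap B_a'$ need not belong to $\sB_a$, so to stay inside $\mathcal{U}_F$ you should pick a basic $B''\in\sB_a$ with $B''\subseteq B_a\cap B_a'$ (the paper is equally loose here, since it draws $W_{a_0}$ from $\mathcal{N}_{a_0}$ rather than from the fixed base); second, your alternative remark that one may ``simply note $a\in\cap\mathcal{U}$ by the $F$-witness'' is false in general, because $a$ need not lie in the neighborhoods that $\mathcal{U}$ assigns to the \emph{other} points of $F$ --- only the refinement fix works.
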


\begin{proof}
Let $F$ be a non-empty subset of $A$, $\mathcal{U}_0\in\mathcal{U}_F$, and  
$G=\cap\mathcal{U}_0$. Suppose that there exists $a_0\in A$ such that 
$a_0\notin \overline{G}$. Then there is $W_{a_0}\in \mathcal{N}_{a_0}$ 
such that $W_{a_0}\cap G=\emptyset$. Let 
$V_{a_0}= W_{a_0}$ if $a_0\notin F$ and  
$V_{a_0}= U_{a_0} \cap W_{a_0}$, where 
$U_{a_0}\in \mathcal{U}_0$ and $U_{a_0}\in \mathcal{N}_{a_0}$,
if $a_0\in F$. Then the family $\mathcal{U}_1:=\{V_{a_0}\}\cup \{U_a:U_a\in\mathcal{U}_0,a\in F\setminus\{a_0\}\}$ has  
the property that $\cap\mathcal{U}_1=\emptyset$, a 
contradiction. Therefore $A\subset \overline{\cap\mathcal{U}}$ for 
every $\mathcal{U}\in\mathcal{U}_F$ and every non-empty subset $F$ of $A$ with 
$|F|<\omega$. Thus, 
$A\subset\cap\{\overline{\cap\mathcal{U}}:\mathcal{U}\in\mathcal{U}_F, \emptyset \neq F\subset A, |F|<\omega\}$.
\end{proof}

\begin{theorem}[\cite{Got14}]\label{T1}
Let $X$ be a space and $A$ be a maximal finitely 
non-Hausdorff subset of $X$. Then
$A=\cap\{\overline{\cap\mathcal{U}}:\mathcal{U}\in\mathcal{U}_F, \emptyset\neq F\subset A, |F|<\omega\}$.
\end{theorem}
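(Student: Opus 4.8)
The plan is to prove the two inclusions separately. One of them, namely $A\subseteq\cap\{\overline{\cap\mathcal{U}}:\mathcal{U}\in\mathcal{U}_F, \emptyset\neq F\subset A, |F|<\omega\}$, is exactly the content of Lemma \ref{L1}, so no new work is needed there. All the effort goes into the reverse inclusion, and the natural tool for it is the \emph{maximality} of $A$: I would write $B:=\cap\{\overline{\cap\mathcal{U}}:\mathcal{U}\in\mathcal{U}_F, \emptyset\neq F\subset A, |F|<\omega\}$ and show that every $p\in B$ already lies in $A$. The device for this is the claim that, for each $p\in B$, the set $A\cup\{p\}$ is itself finitely non-Hausdorff. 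Granting the claim, since $A\subseteq A\cup\{p\}$ and $A$ is a maximal finitely non-Hausdorff subset, we get $A\cup\{p\}=A$, i.e. $p\in A$; hence $B\subseteq A$, which combined with Lemma \ref{L1} yields $A=B$.

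To verify the claim I would fix $p\in B$, take an arbitrary non-empty finite $F'\subseteq A\cup\{p\}$ and a family $\mathcal{U}'\in\mathcal{U}_{F'}$, and check $\cap\mathcal{U}'\ne\emptyset$ by a short case analysis. If $p\notin F'$ then $F'\subseteq A$ and $\cap\mathcal{U}'\ne\emptyset$ is immediate because $A$ is finitely non-Hausdorff. If $F'=\{p\}$ then $\cap\mathcal{U}'=V_p\ne\emptyset$, where $V_p\in\mathcal{B}_p$ is the chosen basic neighborhood of $p$, simply because $p\in V_p$. The essential case is $p\in F'$ with $F:=F'\setminus\{p\}\ne\emptyset$: here I would write $\mathcal{U}'=\{U_x:x\in F\}\cup\{V_p\}$ and set $\mathcal{U}:=\{U_x:x\in F\}\in\mathcal{U}_F$. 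Since $F$ is a non-empty finite subset of $A$, the definition of $B$ gives $p\in\overline{\cap\mathcal{U}}$, so the basic neighborhood $V_p$ of $p$ must meet $\cap\mathcal{U}$; therefore $\cap\mathcal{U}'=V_p\cap(\cap\mathcal{U})\ne\emptyset$, as required.

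The step I expect to be the crux — though in the end it is short — is recognizing that the closure appearing in the definition of $B$ is doing exactly the right job: the membership $p\in\overline{\cap\mathcal{U}}$ says precisely that any neighborhood of $p$ meets $\cap\mathcal{U}$, and that is exactly the condition that keeps the finite intersections non-empty when $p$ is adjoined to $A$. I note that there is no loss of generality in working throughout with families drawn from the fixed local bases $\mathcal{B}_x$, since these are the families that constitute $\mathcal{U}_F$ in Definition \ref{DIG1}, and an arbitrary open neighborhood always contains a basic one. Once the claim is in hand, maximality closes the argument and establishes the reverse inclusion, completing the proof.
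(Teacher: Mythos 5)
Your proof is correct and follows essentially the same route as the paper's: both use Lemma \ref{L1} for one inclusion and then adjoin a point $p$ of the intersection to $A$, using the closure condition to verify that $A\cup\{p\}$ is finitely non-Hausdorff, so that maximality forces $p\in A$. The only differences are cosmetic — you argue directly rather than by contradiction and spell out the finite-subset case analysis that the paper leaves implicit.
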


\begin{proof}
Let $A$ be a maximal finitely non-Hausdorff subset of $X$. Then it follows from Lemma \ref{L1} that  $A\subset\cap\{\overline{\cap\mathcal{U}}:\mathcal{U}\in\mathcal{U}_F, \emptyset\neq F\subset A, |F|<\omega\}$. Suppose that there is 
$x_0\in \cap\{\overline{\cap\mathcal{U}}:\mathcal{U}\in\mathcal{U}_F, \emptyset\neq F\subset A, |F|<\omega\}\setminus A$. 
Then $U\cap(\cap\mathcal{U})\ne\emptyset$ for every 
$U\in \mathcal{N}_{x_0}$, every $\mathcal{U}\in\mathcal{U}_F$, and every non-empty finite subset $F$ of $A$. Thus, for the set $A_1:=A\cup\{x_0\}$, we have that if 
$F\subset A_1$ with $F\neq \emptyset$ and $|F|<\omega$ and
$\mathcal{U}\in\mathcal{U}_{F}$, then $\cap\mathcal{U}\ne\emptyset$.
Therefore, $A_1$ is a finitely non-Hausdorff subset of $X$ and 
$A\subsetneq A_1$, a contradiction with the maximality of $A$.
\end{proof}

\begin{corollary}\label{C1}
Every maximal finitely non-Hausdorff subset of a space $X$ is a closed set.
\end{corollary}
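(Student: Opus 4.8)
The plan is to read off this corollary directly from Theorem \ref{T1}, which has just been established. The key observation is that the set-theoretic description of a maximal finitely non-Hausdorff subset $A$ provided by that theorem exhibits $A$ as an intersection of closed sets, and an arbitrary intersection of closed sets is closed in any topological space.

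Concretely, I would first invoke Theorem \ref{T1} to write
\[
A=\cap\{\overline{\cap\mathcal{U}}:\mathcal{U}\in\mathcal{U}_F, \emptyset\neq F\subset A, |F|<\omega\}.
\]
Next I would note that each set appearing in this intersection is of the form $\overline{\cap\mathcal{U}}$, i.e., the closure of some subset of $X$, and is therefore closed by definition of the closure operator. Since $A$ is non-empty (every finitely non-Hausdorff subset is non-empty by Definition \ref{DIG1}), there is at least one non-empty finite $F\subset A$ and at least one family $\mathcal{U}\in\mathcal{U}_F$, so the indexing family is non-empty and no degenerate-intersection convention is needed. Finally, I would conclude that $A$, being an intersection of a non-empty family of closed subsets of $X$, is closed.

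I do not anticipate any genuine obstacle here: once Theorem \ref{T1} is in hand, the corollary is an immediate consequence of the elementary fact that intersections of closed sets are closed, together with the fact that a closure is always closed. The only point deserving a word of care is the non-emptiness of the indexing family, which is guaranteed because finitely non-Hausdorff subsets are non-empty; this is worth a brief remark but presents no real difficulty.
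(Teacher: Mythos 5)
Your proof is correct and is exactly the argument the paper intends: Corollary \ref{C1} is stated without proof precisely because, as you observe, Theorem \ref{T1} expresses the maximal finitely non-Hausdorff set $A$ as an intersection of closures $\overline{\cap\mathcal{U}}$, which is an intersection of closed sets and hence closed. Your extra remark on the non-emptiness of the indexing family is a harmless and sensible bit of care.
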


\begin{corollary}\label{C2}
Let $X$ be a space and $A$ be a finitely non-Hausdorff subset of 
$X$. Then $\overline{A}$ is a finitely non-Hausdorff subset of $X$.
\end{corollary}

\begin{proof}
It follows immediately from Proposition \ref{P2}, Corollary \ref{C1} and Proposition \ref{P3}.
\end{proof}

\begin{corollary}\label{C3}
Let $X$ be a space and $A$ be a finitely non-Hausdorff subset of 
$X$. If $x\in A$ then $A\subset\cap\{\overline{B}:B\in\sB_x\}$, hence $A\subset\cap_{x\in A}(\cap\{\overline{B}:B\in\sB_x\})$.
\end{corollary}

In relation to Corollary \ref{C3} we can say more.

\begin{lemma}\label{L2}
Let $X$ be a space and $x\in X$. Then $\cap\{\overline{B}:B\in \sB_x\}=\cup\{M:M$ is a (maximal) finitely non-Hausdorff subset of $X$ that contains $x\}$.
\end{lemma}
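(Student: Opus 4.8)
The plan is to prove the two inclusions separately, using Corollary \ref{C3} for one direction and an explicit two-point set for the other. Write $K:=\cap\{\overline{B}:B\in\sB_x\}$ for the set on the left-hand side.

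For the inclusion $\supseteq$, let $M$ be any (maximal) finitely non-Hausdorff subset of $X$ containing $x$. By Corollary \ref{C3} applied with this $x\in M$, we have $M\subseteq\cap\{\overline{B}:B\in\sB_x\}=K$. Since this holds for every such $M$, the union on the right is contained in $K$. This direction is immediate and requires no further work.

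For the inclusion $\subseteq$, I would fix $y\in K$ and show that the two-point set $\{x,y\}$ is finitely non-Hausdorff. The only non-trivial finite subset to test is $F=\{x,y\}$ itself, since a singleton imposes no condition (a single basic open set is non-empty). So let $\sU=\{B_x,B_y\}\in\sU_F$ with $B_x\in\sB_x$ and $B_y\in\sB_y$. Because $y\in K$ we have $y\in\overline{B_x}$, and $B_y$ is an open neighborhood of $y$; hence $B_y\cap B_x\ne\emptyset$, so $\cap\sU\ne\emptyset$. Thus $\{x,y\}$ is a finitely non-Hausdorff subset of $X$ containing $x$ (if $y=x$ this reduces to the singleton $\{x\}$, which is trivially finitely non-Hausdorff). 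By Proposition \ref{P2} it extends to a maximal finitely non-Hausdorff subset $M$ with $\{x,y\}\subseteq M$, so $y\in M$ lies in the union on the right. As $y\in K$ was arbitrary, $K$ is contained in that union.

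Combining the two inclusions gives the equality. The only point requiring care is the verification that $\{x,y\}$ is finitely non-Hausdorff, which rests on translating $y\in\overline{B_x}$ into the statement that every neighborhood of $y$ — in particular every $B_y\in\sB_y$ — meets $B_x$; once this is in hand the rest is bookkeeping. I expect no genuine obstacle, and the same argument proves the statement simultaneously for the \emph{finitely non-Hausdorff} and the \emph{maximal finitely non-Hausdorff} readings indicated by the parentheses, the former being even more direct since $\{x,y\}$ is already such a set.
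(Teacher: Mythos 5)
Your proposal is correct and takes essentially the same approach as the paper: the inclusion of $\cap\{\overline{B}:B\in\sB_x\}$ into the union is proved identically, by checking that the two-point set $\{x,y\}$ is finitely non-Hausdorff (every $B_y\in\sB_y$ meets every $B_x\in\sB_x$ because $y\in\overline{B_x}$) and then invoking Proposition \ref{P2}. For the reverse inclusion you cite Corollary \ref{C3} where the paper instead unfolds the definition via Proposition \ref{P3}, but Corollary \ref{C3} is just the packaged form of that same argument (it rests on Lemma \ref{L1}), so the difference is cosmetic.
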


\begin{proof}
Let $y\in \cap\{\overline{B}:B\in \sB_x\}$ and $U$ be an open neighborhood of $y$. Then $U\cap B\ne\emptyset$ for every $B\in \sB_x$. Therefore the set $\{x,y\}$ is a finitely non-Hausdorff subset of $X$, hence it is contained in some maximal one. Therefore $\cap\{\overline{B}:B\in \sB_x\}\subseteq\cup\{M:M$ is a (maximal) finitely non-Hausdorff subset of $X$ that contains $x\}$.

Now let $y\in\cup\{M:M$ is a (maximal) finitely non-Hausdorff subset of $X$ that contains $x\}$. Then there exists a (maximal) finitely non-Hausdorff subset $M_y$ of $X$ such that $y\in M_y$. Then the set $\{x,y\}\subset M_y$ is a finitely non-Hausdorff subset of $X$ (Proposition \ref{P3}). Thus if $B\in\sB_x$ and $U$ is an arbitrary open neighborhood of $y$ we have $B\cap U\ne\emptyset$. Hence $y\in\overline{B}$ and therefore $y\in\cap\{\overline{B}:B\in \sB_x\}$.
\end{proof}

\begin{corollary}\label{C4}
Let $X$ be a space and $A$ be a finitely non-Hausdorff subset of 
$X$. Then $\cap_{x\in A}(\cap\{\overline{B}:B\in\sB_x\})=\cap_{x\in A}(\cup\{M:M$ is a (maximal) finitely non-Hausdorff subset of $X$ that contains $x\})$.
\end{corollary}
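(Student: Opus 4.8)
The plan is to observe that Corollary \ref{C4} is a purely formal consequence of Lemma \ref{L2}: the two sides of the claimed identity are obtained by intersecting, over all $x\in A$, two families of sets that Lemma \ref{L2} shows to be equal term-by-term.

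First I would fix an arbitrary $x\in A$; since $A$ is a finitely non-Hausdorff subset of $X$ it is in particular non-empty, so such a point exists. Applying Lemma \ref{L2} at $x$ gives
\[
\cap\{\overline{B}:B\in\sB_x\}=\cup\{M:M \text{ is a (maximal) finitely non-Hausdorff subset of } X \text{ that contains } x\}.
\]
Next I would intersect both sides of this pointwise identity over all $x\in A$. Since the indexed families $\big(\cap\{\overline{B}:B\in\sB_x\}\big)_{x\in A}$ and $\big(\cup\{M:x\in M\}\big)_{x\in A}$ agree term-by-term, their intersections coincide, which yields exactly
\[
\cap_{x\in A}\big(\cap\{\overline{B}:B\in\sB_x\}\big)=\cap_{x\in A}\big(\cup\{M:M \text{ is a (maximal) finitely non-Hausdorff subset of } X \text{ that contains } x\}\big),
\]
the assertion of the corollary.

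There is essentially no obstacle to overcome here, since all of the topological content has already been absorbed into Lemma \ref{L2}; the corollary is merely that identity intersected over the points of $A$. I would note, however, that the hypothesis that $A$ be finitely non-Hausdorff is not genuinely used: the same argument establishes the identity for an arbitrary non-empty subset $A\subseteq X$, the stated hypothesis serving only to guarantee $A\neq\emptyset$ and to place the result in the context of Corollary \ref{C3}, where it is applied.
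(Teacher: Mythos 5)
Your proof is correct and is exactly the argument the paper intends: Corollary \ref{C4} is stated without proof precisely because it is the pointwise identity of Lemma \ref{L2} intersected over all $x\in A$, which is what you do. Your side remark that the finitely non-Hausdorff hypothesis on $A$ is only used to ensure $A\neq\emptyset$ is also accurate.
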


The following example shows that the intersection in Corollary \ref{C4} could be different from $A$ even when $A$ is a   maximal non-Hausdorff subset of $X$.

\begin{example}\label{E1}
There exists a first countable $T_1$-space $X$ and a maximal finitely 
non-Hausdorff subset $A$ of $X$ such that 
$$A\subsetneq\cap_{x\in A}(\cap\{\overline{B}:B\in\sB_x\}).$$  
\end{example}

\begin{proof}
Let $\{b\}$, $A:=\{a_i:i\in\omega\}$, $S:=\{n:n\in\omega\}$, and  $N^2:=\{(i,n):i,n\in\omega\}$ be pairwise disjoint sets and $X:=A\cup \{b\}\cup S\cup N^2$. We define topology on $X$ as follows: all points in $S\cup N^2$ are isolated (hence each one is an open and closed set); the points in $S$ form a convergent sequence that converges to the point $a_i$ for every $i\in\omega$; for each $i\in\omega$ the points $\{(i,n):n\in\omega\}$ form a convergent sequence that approaches to the points $a_i$ and $b$. In order $X$ to be first countable we also require the set $\{\{b\}\cup\{(i,n):i\in\omega\}:n\ge k\}:k\in\omega\}$ to form an open basis for the topology at $b$. Then $A$ and each of the sets $\{a_i, b\}$, $i\in\omega$, are maximal finitely non-Hausdorff subsets of $X$, $A\cup\{b\}$ is not a finitely non-Hausdorff subset of $X$ and $A\subsetneq A\cup\{b\}=\cap_{x\in A}(\cap\{\overline{B}:B\in\sB_x\})$.
\end{proof}

We recall that the \emph{$\theta$-closure} of a set $A$ in a space $X$ is the set 
$\cl_\theta(A) := \{x\in X :$ for every $B\in \mathcal{B}_x, \overline{B}\cap A \ne \emptyset\}.$

\begin{proposition}\label{P4}
Let $X$ be a space and $x\in X$. Then $\cl_\theta(\{x\})=\cap\{\overline{B}:B\in \sB_x\}$.
\end{proposition}

\begin{corollary}\label{C5}
Let $X$ be a space and $x\in X$. Then $\cl_\theta(\{x\})=\cup\{M:M$ is a (maximal) finitely non-Hausdorff subset of $X$ that contains $x\}$.
\end{corollary}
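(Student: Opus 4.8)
The plan is to obtain Corollary~\ref{C5} by simply chaining together the two characterizations that have already been established, namely Proposition~\ref{P4} and Lemma~\ref{L2}. Proposition~\ref{P4} identifies the $\theta$-closure of the singleton $\{x\}$ with the intersection $\cap\{\overline{B}:B\in\sB_x\}$ of the closures of the basic neighborhoods of $x$. Lemma~\ref{L2} in turn identifies that same intersection $\cap\{\overline{B}:B\in\sB_x\}$ with the union of all (maximal) finitely non-Hausdorff subsets of $X$ containing $x$. Since both results equate their respective expressions with the common middle term $\cap\{\overline{B}:B\in\sB_x\}$, transitivity of equality immediately yields the desired conclusion.

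Concretely, I would write: by Proposition~\ref{P4} we have $\cl_\theta(\{x\})=\cap\{\overline{B}:B\in\sB_x\}$, and by Lemma~\ref{L2} we have $\cap\{\overline{B}:B\in\sB_x\}=\cup\{M:M\text{ is a (maximal) finitely non-Hausdorff subset of }X\text{ that contains }x\}$. Combining these two equalities gives $\cl_\theta(\{x\})=\cup\{M:M\text{ is a (maximal) finitely non-Hausdorff subset of }X\text{ that contains }x\}$, which is exactly the claimed identity. This is a one-line deduction and requires no additional machinery.

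There is essentially no obstacle here: the corollary is a formal consequence of two already-proved equalities, so the only thing to verify is that the middle terms match syntactically, which they do exactly as written. The one point worth a moment's care is that both Proposition~\ref{P4} and Lemma~\ref{L2} are stated relative to the fixed local base $\sB_x$ chosen in the standing conventions of the paper, so the two occurrences of $\cap\{\overline{B}:B\in\sB_x\}$ refer to the same object and the transitivity is legitimate. Given that, the proof is complete.
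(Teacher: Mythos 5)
Your proof is correct and is exactly the paper's intended argument: the paper states Corollary~\ref{C5} without proof precisely because it follows immediately by chaining Proposition~\ref{P4} with Lemma~\ref{L2}, as you do. Nothing further is needed.
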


For convenience we introduce the following notation to be used in the proof of our main result.

\begin{notation}
Let $X$ be a space and $A\subseteq X$. Then 
\begin{equation*}
\sF_A:=\{F:F\subset A, F\text{ is a finite, finitely non-Hausdorff subset of }X\}.
\end{equation*}
\end{notation}

Using this notation we can restate Corollary \ref{C5} as follows:

\begin{corollary}\label{C6}
Let $X$ be a space and $x\in X$. Then $\cl_\theta(\{x\})=\cup\{F:F\in \sF_X, x\in F\}$.
\end{corollary}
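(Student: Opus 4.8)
The plan is to recognize that Corollary \ref{C6} is merely a reformulation of Corollary \ref{C5} in the language of the set $\sF_X$, so it suffices to show that the two descriptions of $\cl_\theta(\{x\})$ coincide, that is,
\begin{equation*}
\cup\{F:F\in\sF_X,\ x\in F\}=\cup\{M:M\text{ is a (maximal) finitely non-Hausdorff subset of }X\text{ that contains }x\}.
\end{equation*}
Once this equality is in hand, the corollary follows immediately from Corollary \ref{C5}. Both inclusions are direct consequences of the structural facts already established.

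For the inclusion $\subseteq$, I would take $y\in F$ for some finite, finitely non-Hausdorff $F\subseteq X$ with $x\in F$. By Proposition \ref{P2}, $F$ is contained in some maximal finitely non-Hausdorff subset $M$ of $X$; since $x\in F\subseteq M$, this $M$ contains $x$, and $y\in F\subseteq M$ lies in the right-hand union. For the reverse inclusion, I would take $y\in M$, where $M$ is a maximal finitely non-Hausdorff subset of $X$ containing $x$. Then $\{x,y\}\subseteq M$, and by Proposition \ref{P3} every subset of a finitely non-Hausdorff set is again finitely non-Hausdorff, so $\{x,y\}$ is a finite, finitely non-Hausdorff subset of $X$ containing $x$; hence $\{x,y\}\in\sF_X$ and $y\in\{x,y\}$ belongs to the left-hand union.

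I expect no substantial obstacle here, as the statement is an essentially immediate bookkeeping consequence of Corollary \ref{C5} together with Propositions \ref{P2} and \ref{P3}. The only point requiring a moment's care is the reverse inclusion, where one must observe that it is the two-element set $\{x,y\}$—rather than the whole of $M$—that witnesses membership in $\sF_X$; this is exactly what Proposition \ref{P3} supplies. Alternatively, one could bypass Corollary \ref{C5} altogether and argue directly from Proposition \ref{P4}: a point $y$ lies in $\cl_\theta(\{x\})=\cap\{\overline{B}:B\in\sB_x\}$ precisely when every $B\in\sB_x$ meets every neighborhood of $y$, which is exactly the condition that $\{x,y\}$ be finitely non-Hausdorff, again reducing the whole statement to the consideration of pairs.
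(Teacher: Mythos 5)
Your proof is correct and takes essentially the same route as the paper: the paper presents Corollary \ref{C6} as an immediate restatement of Corollary \ref{C5}, and your bridging argument via Propositions \ref{P2} and \ref{P3} (that the union of finite finitely non-Hausdorff sets containing $x$ coincides with the union of maximal ones) just makes explicit the bookkeeping the paper leaves implicit. Your alternative direct argument through Proposition \ref{P4} and the pairs $\{x,y\}$ is likewise exactly the content of the proof of Lemma \ref{L2}, so nothing here diverges from the paper's reasoning.
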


\begin{corollary}\label{C6}
Let $X$ be a space and $x\in X$. The union of all (maximal) finitely non-Hausdorff subsets of $X$ that contain $x$ is a closed set in $X$.
\end{corollary}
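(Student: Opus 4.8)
The plan is to recognize that the set whose closedness we must establish has already been computed, up to notation, by the preceding results, and then to read off closedness from its representation as an intersection of closed sets. First I would invoke Corollary \ref{C5}, which asserts that $\cl_\theta(\{x\})$ equals $\cup\{M:M$ is a (maximal) finitely non-Hausdorff subset of $X$ that contains $x\}$. Thus the union in the statement is precisely $\cl_\theta(\{x\})$, and it suffices to show that $\cl_\theta(\{x\})$ is closed.

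Next I would appeal to Proposition \ref{P4}, which gives the representation $\cl_\theta(\{x\})=\cap\{\overline{B}:B\in\sB_x\}$. Each $\overline{B}$ is a closed subset of $X$ by definition of the closure operator, so the right-hand side is an intersection of a family of closed sets. Since an arbitrary intersection of closed sets is closed, we conclude at once that $\cl_\theta(\{x\})$, and hence the union of all (maximal) finitely non-Hausdorff subsets of $X$ containing $x$, is closed in $X$.

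I do not expect a genuine obstacle in this argument: the substantive work has been carried out in Proposition \ref{P4} and Corollary \ref{C5}, and the remaining ingredient is merely the elementary fact that intersections of closed sets are closed. The only point deserving a moment's care is that the $\theta$-closure of a general set need not be closed, so the proof must rest on the special singleton identity of Proposition \ref{P4} rather than on any formal closure property of the operator $\cl_\theta$ itself; it is exactly this representation as $\cap\{\overline{B}:B\in\sB_x\}$ that makes the conclusion immediate.
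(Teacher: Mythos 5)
Your proof is correct and follows essentially the same route the paper intends: the union is identified with $\cl_\theta(\{x\})=\cap\{\overline{B}:B\in\sB_x\}$ (via Corollary \ref{C5} and Proposition \ref{P4}, which together are just Lemma \ref{L2}), and closedness is then immediate since an intersection of closed sets is closed. Your closing caveat---that the argument must rest on the singleton identity rather than any general closure property of $\cl_\theta$---is a worthwhile observation, as the $\theta$-closure of an arbitrary set need not be closed.
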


We recall that a non-empty subset $A$ of a topological space $X$  is called
\emph{finitely non-Urysohn} (see \cite{Got15}) if for every non-empty finite subset $F$ of $A$ and 
every family $\{U_x:x\in F\}$ of open neighborhoods $U_x$ of $x\in F$, 
$\cap\{\overline{U_x}:x\in F\}\ne\emptyset$ and \emph{the non-Urysohn number of $X$} 
is defined as follows: $nu(X):=1+\sup\{|A|:A$ is a finitely non-Urysohn subset of 
$X\}$.

\begin{corollary}\label{C7}
Let $X$ be a space and $x\in X$. Then $\cl_\theta(\{x\})$ is a finitely non-Urysohn subset of $X$ that contains $x$.
\end{corollary}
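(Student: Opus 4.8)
The plan is to show directly that the fixed point $x$ itself serves as a common witness, i.e. that $x$ lies in every finite intersection of closed neighborhoods arising from points of $\cl_\theta(\{x\})$; this single observation yields both assertions of the corollary at once. First I would check that $x\in\cl_\theta(\{x\})$: for each $B\in\sB_x$ we have $x\in B\subseteq\overline{B}$, so $\overline{B}\cap\{x\}\ne\emptyset$, and therefore $x$ satisfies the defining condition of the $\theta$-closure of $\{x\}$. Hence $\cl_\theta(\{x\})$ is a non-empty subset of $X$ containing $x$, and it is legitimate to ask whether it is finitely non-Urysohn.

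For the finitely non-Urysohn property, I would fix a non-empty finite subset $F\subseteq\cl_\theta(\{x\})$ and, for each $y\in F$, an arbitrary open neighborhood $U_y$ of $y$, and then prove that $x\in\cap\{\overline{U_y}:y\in F\}$. Fix $y\in F$. Since $\sB_y$ is a local base at $y$ and $U_y$ is an open neighborhood of $y$, there is some $B_y\in\sB_y$ with $B_y\subseteq U_y$, whence $\overline{B_y}\subseteq\overline{U_y}$ by monotonicity of the closure. Because $y\in\cl_\theta(\{x\})$, the definition of the $\theta$-closure gives $\overline{B_y}\cap\{x\}\ne\emptyset$, that is, $x\in\overline{B_y}\subseteq\overline{U_y}$. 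As $y\in F$ was arbitrary, $x\in\overline{U_y}$ for every $y\in F$, so $x\in\cap\{\overline{U_y}:y\in F\}\ne\emptyset$. This verifies the defining condition of a finitely non-Urysohn subset, completing the proof.

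The only point requiring care — and the one place the membership in $\cl_\theta(\{x\})$ is actually used — is the passage from the \emph{basic} neighborhoods that appear in the definition of $\cl_\theta$ to the \emph{arbitrary} open neighborhoods $U_y$ that appear in the definition of a finitely non-Urysohn set; this is handled simply by shrinking each $U_y$ to a basic neighborhood $B_y\subseteq U_y$ and invoking monotonicity of the closure operator. No maximality and no property of $F$ beyond non-emptiness is needed. Equivalently, one may read the argument through Proposition \ref{P4}: the condition $y\in\cl_\theta(\{x\})$ says precisely that $x\in\cap\{\overline{B}:B\in\sB_y\}$, so $x$ lies in every basic closed neighborhood of $y$ and hence in every closed neighborhood of $y$. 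I expect no genuine obstacle; the corollary amounts to the remark that the distinguished point $x$ is a common element of all the relevant closed sets.
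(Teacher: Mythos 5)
Your proof is correct, and it captures exactly the observation the paper relies on: Corollary \ref{C7} is stated without proof as an immediate consequence of the surrounding material, and in both your argument and the paper's intended one the whole point is that $x$ itself lies in $\overline{U_y}$ for every open neighborhood $U_y$ of every $y\in\cl_\theta(\{x\})$, so the single point $x$ witnesses all the required non-empty intersections. Your route unfolds the definition of the $\theta$-closure directly (shrinking each $U_y$ to a basic $B_y\in\sB_y$ and using $\overline{B_y}\cap\{x\}\ne\emptyset$), whereas the paper's placement of the corollary suggests deriving it from the characterization of $\cl_\theta(\{x\})$ as the union of the (maximal) finitely non-Hausdorff sets containing $x$ (Corollaries \ref{C5} and \ref{C6}); the two derivations are the same computation in different clothing, and yours has the small advantage of not invoking that machinery at all.

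One cosmetic slip in your closing remark: you attribute to Proposition \ref{P4} the equivalence ``$y\in\cl_\theta(\{x\})$ iff $x\in\cap\{\overline{B}:B\in\sB_y\}$,'' but that is literally the definition of the $\theta$-closure; what Proposition \ref{P4} asserts is the symmetric reformulation $\cl_\theta(\{x\})=\cap\{\overline{B}:B\in\sB_x\}$, i.e., in terms of closures of basic neighborhoods of $x$ rather than of $y$. This misattribution is confined to a non-essential aside and does not affect the validity of your proof.
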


\begin{corollary}\label{C8}
Let $X$ be a space and $x\in X$. Then $nu(X)\ge |\cup\{M:M$ is a maximal finitely non-Hausdorff subset of $X$ that contains $x\}|$.
\end{corollary}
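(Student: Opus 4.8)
The plan is to reduce the statement directly to Corollaries \ref{C5} and \ref{C7}, which have already done all the work. Write $S:=\cup\{M:M$ is a maximal finitely non-Hausdorff subset of $X$ that contains $x\}$ for the set whose cardinality is to be bounded. First I would invoke Corollary \ref{C5} to identify this set with the $\theta$-closure of the point, namely $S=\cl_\theta(\{x\})$.

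Next I would recall, via Corollary \ref{C7}, that $\cl_\theta(\{x\})$ is itself a finitely non-Urysohn subset of $X$ (and it contains $x$). Hence $S$ is a finitely non-Urysohn subset of $X$, so by the definition $nu(X):=1+\sup\{|A|:A$ is a finitely non-Urysohn subset of $X\}$ its cardinality is one of the quantities over which the supremum is taken. Therefore $|S|\le\sup\{|A|:A$ is a finitely non-Urysohn subset of $X\}\le nu(X)$, which is exactly the claimed inequality $nu(X)\ge|S|$.

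There is essentially no obstacle here: the content lies entirely in the preceding corollaries, and the present statement is just the combination of the identification $S=\cl_\theta(\{x\})$ with the fact that $\cl_\theta(\{x\})$ is finitely non-Urysohn. The only bookkeeping point is the leading ``$1+$'' in the definition of $nu(X)$, which merely strengthens the bound (one in fact obtains $nu(X)\ge 1+|S|$), so the stated $\ge$ holds with room to spare.
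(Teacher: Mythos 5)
Your proof is correct and follows exactly the route the paper intends: Corollary \ref{C8} is stated without proof precisely because it is the immediate combination of Corollary \ref{C5} (identifying the union of maximal finitely non-Hausdorff sets containing $x$ with $\cl_\theta(\{x\})$) and Corollary \ref{C7} ($\cl_\theta(\{x\})$ is finitely non-Urysohn), together with the definition of $nu(X)$. Nothing is missing, and your remark about the leading ``$1+$'' only strengthening the bound is accurate.
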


\begin{corollary}\label{C8}
If $X$ is a space then $nu(X)\ge nh(X)$.
\end{corollary}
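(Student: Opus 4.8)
The plan is to prove the cardinal inequality by establishing the underlying set-theoretic containment: \emph{every finitely non-Hausdorff subset of $X$ is automatically a finitely non-Urysohn subset of $X$}. Once this is known, the two suprema appearing in the definitions of $nh(X)$ and $nu(X)$ are taken over collections one of which contains the other, so the inequality $nu(X)\ge nh(X)$ drops out after adding $1$ to both sides.

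First I would fix an arbitrary finitely non-Hausdorff subset $A$ of $X$ and check directly that it satisfies the defining condition for finitely non-Urysohn. Take any non-empty finite $F\subset A$ and any family $\{U_x:x\in F\}$ of open neighborhoods $U_x$ of $x\in F$. Since $A$ is finitely non-Hausdorff, $\cap\{U_x:x\in F\}\ne\emptyset$. The single observation that does all the work is that $U_x\subseteq\overline{U_x}$ for each $x$, whence
\begin{equation*}
\emptyset\ne\cap\{U_x:x\in F\}\subseteq\cap\{\overline{U_x}:x\in F\}.
\end{equation*}
As $F$ and the chosen neighborhoods were arbitrary, $A$ is a finitely non-Urysohn subset of $X$.

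It follows that $\{|A|:A$ is a finitely non-Hausdorff subset of $X\}\subseteq\{|A|:A$ is a finitely non-Urysohn subset of $X\}$, so the supremum defining $nu(X)$ dominates the one defining $nh(X)$, and therefore $nu(X)\ge nh(X)$. There is no genuine obstacle in this argument; the only point worth noting is that the definitions of the two cardinal functions differ solely in replacing each $U_x$ by $\overline{U_x}$, and that this enlargement preserves non-emptiness of finite intersections in the trivial direction. (Alternatively, one could route the argument through Corollary \ref{C7}: each maximal finitely non-Hausdorff subset $M$ containing a point $x$ is contained in $\cl_\theta(\{x\})$ by Corollary \ref{C5}, and $\cl_\theta(\{x\})$ is finitely non-Urysohn by Corollary \ref{C7}, giving $|M|\le nu(X)$; but the direct containment above is cleaner.)
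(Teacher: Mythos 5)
Your proof is correct. The one-line observation that $\cap\{U_x:x\in F\}\subseteq\cap\{\overline{U_x}:x\in F\}$ indeed shows that every finitely non-Hausdorff subset of $X$ is finitely non-Urysohn, and the inequality $nu(X)\ge nh(X)$ follows at once by comparing the two suprema. (The only definitional point to keep straight is that Definition \ref{DIG1} quantifies over basic neighborhoods $\sU\in\sU_F$ while the finitely non-Urysohn condition quantifies over arbitrary neighborhoods; this is harmless, since any open neighborhood of $x$ contains a member of $\sB_x$, so shrinking to basic sets preserves the argument.)

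Your route is genuinely different from the paper's, however. The paper derives this corollary from its $\theta$-closure machinery: by Corollary \ref{C5}, $\cl_\theta(\{x\})$ equals the union of \emph{all} (maximal) finitely non-Hausdorff subsets containing $x$; by Corollary \ref{C7} this union is itself finitely non-Urysohn; hence $nu(X)$ dominates the cardinality of that whole union, which is the content of the first Corollary \ref{C8}, and the inequality $nu(X)\ge nh(X)$ falls out as a weakening. That route proves strictly more: by Example \ref{E1}, the union of the maximal finitely non-Hausdorff sets through a point (there, $A\cup\{b\}$) can fail to be finitely non-Hausdorff and can properly contain each individual such set, so the paper's intermediate bound $nu(X)\ge|\cup\{M:M\ni x,\ M \text{ maximal finitely non-Hausdorff}\}|$ is not recoverable from your containment alone. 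What your argument buys in exchange is self-containment and brevity: it needs none of Lemma \ref{L2}, Proposition \ref{P4}, or Corollaries \ref{C5}--\ref{C7}, just the triviality $U\subseteq\overline{U}$. Your parenthetical remark correctly identifies the paper's intended derivation as the alternative.
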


We finish this section with one more observation.

\begin{lemma}\label{L3}
Let $X$ be a space and $x\in X$ be a point such that $\overline{U}=X$ whenever $U\subset X$ is an open neighborhood of $x$. If $M$ is a maximal finitely non-Hausdorff subset of $X$ then $x\in M$.
\end{lemma}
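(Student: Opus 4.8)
The plan is to use the maximality of $M$ directly: I would show that $M\cup\{x\}$ is a finitely non-Hausdorff subset of $X$, and then maximality forces $M\cup\{x\}=M$, that is, $x\in M$. (Equivalently, one could verify that $x\in\overline{\cap\mathcal{U}}$ for every non-empty finite $F\subset M$ and every $\mathcal{U}\in\mathcal{U}_F$, and then invoke the characterization $M=\cap\{\overline{\cap\mathcal{U}}:\mathcal{U}\in\mathcal{U}_F,\ \emptyset\neq F\subset M,\ |F|<\omega\}$ from Theorem \ref{T1}; the two arguments are essentially the same.)

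To check that $M\cup\{x\}$ is finitely non-Hausdorff, I would fix a non-empty finite $F'\subset M\cup\{x\}$ and a family $\mathcal{U}'\in\mathcal{U}_{F'}$, and show that $\cap\mathcal{U}'\ne\emptyset$. If $x\notin F'$, then $F'\subset M$ and the conclusion is immediate, since $M$ is finitely non-Hausdorff. So suppose $x\in F'$, write $F:=F'\setminus\{x\}$, and let $B_x\in\sB_x$ be the member of $\mathcal{U}'$ associated with $x$. If $F=\emptyset$, then $\cap\mathcal{U}'=B_x\ne\emptyset$. If $F\ne\emptyset$, set $G:=\cap\{B_y:y\in F,\ B_y\in\mathcal{U}'\}$; since $F\subset M$ and $M$ is finitely non-Hausdorff, $G\ne\emptyset$, and $G$ is open, being a finite intersection of basic open sets.

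The key step is then the hypothesis on $x$: since $B_x$ is an open neighborhood of $x$, we have $\overline{B_x}=X$, so $B_x$ is dense in $X$ and therefore meets every non-empty open set; in particular $B_x\cap G\ne\emptyset$. Hence $\cap\mathcal{U}'=B_x\cap G\ne\emptyset$, completing the verification that $M\cup\{x\}$ is finitely non-Hausdorff, and maximality of $M$ then yields $x\in M$. I do not expect a genuine obstacle here: the only points requiring care are to isolate the factor $B_x$ contributed by $x$ and to record that the remaining factors intersect in a \emph{non-empty} open set (this uses $F\subset M$), so that the density of $B_x$ can be applied to that open set.
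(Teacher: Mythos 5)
Your proof is correct and is essentially the paper's argument: both rest on the observation that any basic neighborhood $B_x$ of $x$ is dense and therefore meets the non-empty open set $\cap\{B_y:y\in F\}$ arising from a finite $F\subset M$, combined with the maximality of $M$. The only difference is presentational: you verify directly that $M\cup\{x\}$ is finitely non-Hausdorff and invoke maximality, whereas the paper assumes $x\notin M$, extracts from maximality a family with empty intersection, and derives the same contradiction via $y\notin\overline{U}=X$.
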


\begin{proof}
Let $M$ be a maximal finitely non-Hausdorff subset of $X$. Suppose that $x\notin M$. Then there exist a finite set $F\subset M$, $\sU\in\sU_F$ and an open neighborhood $U$ of $x$ such that $(\cap\sU)\cap U=\emptyset$. Since $M$ is a finitely non-Hausdorff subset of $X$, $\cap\sU\ne\emptyset$. Let $y\in\cap\sU$. Then $y\notin\overline{U}$ - contradiction. \end{proof}

\section{More cardinal inequalities involving the non-Hausdorff number}

The following theorem generalizes Hajnal--Juh\'asz inequality that if $X$ is a Hausdorff space then $|X|\le 2^{c(X)\chi(X)}$. 

\begin{theorem}\label{TIG1}
Let $X$ be a space. Then $|X|\le nh(X)^{c(X)\chi(X)}$.
\end{theorem}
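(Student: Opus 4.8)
The proof will adapt the classical Hajnal–Juhász closing-off argument, with the key modification that instead of using Hausdorff separation to bound the size of a closure in terms of a density-type argument, we exploit the non-Hausdorff number $nh(X)$ to bound how many points can collapse together. The plan is to set $\tau := c(X)\chi(X)$ and construct, by transfinite recursion of length $\tau^+$, an increasing chain of subsets $\{A_\alpha : \alpha < \tau^+\}$ of $X$ together with, for each point and each choice of basic neighborhoods, a witnessing family of open sets that "realizes" the cellularity and character bounds. At each stage I would close off $A_\alpha$ so that for every finite $F \subseteq \bigcup_{\beta<\alpha} A_\beta$ and every $\sU \in \sU_F$ with $\cap\sU \ne \emptyset$, a point of $X \setminus \overline{\cap\sU}$ (when one exists) is thrown into $A_\alpha$; one must check that only $nh(X)^{\le\tau}$ many such choices and points need to be added at each step, so that $|A_\alpha| \le nh(X)^\tau$ is maintained.

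First I would verify the bookkeeping: since $\chi(X) \le \tau$, each point has a local base of size $\le \tau$, so the number of families $\sU \in \sU_F$ over finite $F$ drawn from a set of size $\le nh(X)^\tau$ is at most $(nh(X)^\tau)^{<\omega} \cdot \tau^{<\omega} \le nh(X)^\tau$; hence the recursion can be carried out while keeping each $A_\alpha$ of size at most $nh(X)^\tau$. Set $A := \bigcup_{\alpha<\tau^+} A_\alpha$. Since $\tau^+$ is regular and each $A_\alpha$ has size $\le nh(X)^\tau \ge \tau^+$ (after possibly absorbing $\tau^+$), the set $A$ has the crucial closure property: for every finite $F \subseteq A$ and every $\sU \in \sU_F$, if $\cap\sU \ne \emptyset$ then the recursion has already inserted enough points to guarantee $A \subseteq \overline{\cap\sU}$ fails only when forced, i.e. $A$ is closed under the operation of Lemma~\ref{L1}.

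The heart of the argument is then to show $A = X$, which gives $|X| \le nh(X)^\tau$. Suppose $p \in X \setminus A$. Using the cellularity bound $c(X) \le \tau$, I would produce a maximal cellular family of open sets each meeting $A$ and separating $p$, and argue that $p$ must lie in the closure of $\cap\sU$ for every appropriate finite $F$ and $\sU \in \sU_F$ over $A$; this forces $A \cup \{p\}$ to behave like a finitely non-Hausdorff extension relative to the witnessing neighborhoods, so that by the closing-off property $p$ would already have been caught at some stage $\alpha < \tau^+$, contradicting $p \notin A$. The main obstacle I anticipate is precisely this last step: carefully formulating the closing-off so that the non-Hausdorff number $nh(X)$ controls the multiplicity of points sharing the same neighborhood trace (replacing the role that Hausdorffness plays in the classical proof, where distinct points are separated outright), and ensuring the cellularity argument still delivers a cofinal family of size $\le \tau$ so that no point of $X$ escapes the construction. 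Bounding the number of distinct traces by $nh(X)^\tau$ rather than $2^\tau$ is exactly where the generalization lives, and getting that counting correct is the delicate part.
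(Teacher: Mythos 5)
Your proposal follows the right general strategy---the same Hajnal--Juh\'asz-style closing-off of length $(c(X)\chi(X))^+$ with sets of size at most $nh(X)^{c(X)\chi(X)}$ that the paper uses---but it is missing the one idea that makes the generalization work, and its closing-off condition is too weak to produce the final contradiction. The paper's construction demands (its condition (4)) that for every $x\in G_\eta$ an entire \emph{maximal} finitely non-Hausdorff set $M_x\ni x$ be included in $G_{\eta+1}$; this costs at most $nh(X)$ points per $x$ (Proposition \ref{P2} plus the definition of $nh$), and it is exactly what replaces Hausdorff separation. Indeed, if $y\notin G$, then $y\notin M_x$, and \emph{maximality} of $M_x$ forces $M_x\cup\{y\}$ to fail finite non-Hausdorffness, which yields a finite $F_x\subset M_x\subset G$, a family $\sU\in\sU_{F_x\cup\{x\}}$, and a basic neighborhood $B_\xi$ of $y$ with $(\cap\sU)\cap B_\xi=\emptyset$; Lemma \ref{L1} then gives $x\in\overline{\cap\sU}$, so $G$ gets covered by $\kappa$ many closures avoiding $y$. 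Your proposal has no counterpart to this step. Instead you appeal to the principle that ``$nh(X)$ controls the multiplicity of points sharing the same neighborhood trace,'' which is false as stated: a set of pairwise non-separated points need not be finitely non-Hausdorff (the paper's Example \ref{E1}, where $A\cup\{b\}$ is pairwise non-separated but not finitely non-Hausdorff, shows exactly this), so $nh(X)$ does not bound such fibers. Without the maximal-set closure, the separating configuration for a given $x\in A$ may involve companion points lying \emph{outside} $A$, and the covering of $A$ by sets missing $y$ never gets off the ground. (Even a repaired trace argument, using equality of trace filters on a dense set, would only give a bound in terms of $d(X)\chi(X)$, not $c(X)\chi(X)$.)

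Second, you close off only against single sets: for each pair $(F,\sU)$ you add one point of $X\setminus\overline{\cap\sU}$. The contradiction at the end requires escaping $\kappa$ many closures \emph{simultaneously}: from the local base $\{B_\xi:\xi<\kappa\}$ of the outside point $y$ one forms $\sW_\xi=\{\cap\sU:(\cap\sU)\cap B_\xi=\emptyset\}$, uses cellularity to extract $\sG_\xi\subseteq\sW_\xi$ with $|\sG_\xi|\le\kappa$ and $\cup\sW_\xi\subseteq\overline{\cup\sG_\xi}$, and needs a point of $G$ outside $\cup_{\xi<\kappa}\overline{W_\xi}$ where $W_\xi=\cup\sG_\xi$. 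That is why the paper's condition (5) quantifies over families of $\le\kappa$ open sets, each a union of $\le\kappa$ sets of the form $\cap\sU$; no point added by your single-set closing-off is guaranteed to avoid all $\kappa$ closures at once. Relatedly, your intended contradiction (``$p$ would already have been caught at some stage'') aims at the wrong target: the closing-off never adds the outside point $p$. In the correct argument, regularity of $\kappa^+$ places the $\le\kappa$ many finite sets feeding the $W_\xi$'s inside some $G_\eta$, and condition (5) then produces a point of $G_{\eta+1}\subseteq G$ outside $\cup_{\xi<\kappa}\overline{W_\xi}$, contradicting the fact that these closures cover $G$.
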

\begin{proof}
Let $c(X)\chi(X)=\kappa$, $nh(X)=\tau$ and for each $x\in X$ let $\sB_x$ be a local base for $x$ in $X$ with $|\sB_x|\le\kappa$. Let also $x_0$ be an arbitrary point in $X$.  We construct a sequence $\{G_\eta:\eta<\kappa^+\}$ of subsets of $X$ such that
\begin{itemize}
\item[(1)] $G_0=\{x_0\}$;
\item[(2)] $\cup_{\xi<\eta}G_\xi\subset G_\eta$ and $|G_\eta|\le \tau^\kappa$ for every $0<\eta<\kappa^+$;
\item[(3)] if $\eta$ is a limit ordinal then $G_\eta=\cup_{\xi<\eta}G_\xi$;
\item[(4)] if $x\in G_\eta$ then there exists a maximal finitely non-Hausdorff subset $M_x$ of $X$ such that $M_x\subset G_{\eta+1}$;
\item[(5)] if $\{W_\xi:\xi<\kappa\}$ is a collection of $\le\kappa$ open sets in $X$ such that 
$W_\xi=\cup_{\alpha<\kappa}\sG_\alpha$, where each $\sG_\alpha=\cap\sU_{\xi,\alpha}$ for some $\sU_{\xi,\alpha}\in\sU_{F_{\xi,\alpha}}$ with $F_{\xi,\alpha}\in\sF_{G_\eta}$, and  $\cup_{\xi<\kappa}\overline{W}_\xi\ne X$, then $G_{\eta+1}\setminus(\cup_{\xi<\kappa}\overline{W}_\xi)\ne\emptyset$.
\end{itemize}
Let $G=\cup_{\xi<\kappa^+}G_\eta$. If $G=X$ then the proof is complete. Suppose there is $y\in X\setminus G$. Let $\sB_y=\{B_\xi:\xi<\kappa\}$ be a local base at $y$. For each $\xi<\kappa$ let $\sW_\xi=\{\cap\sU:\sU\in\sU_F, F\in\sF_G, (\cap\sU)\cap B_\xi=\emptyset\}$. Then clearly $y\notin\cup_{\xi<\kappa}\overline{\cup\sW_\xi}$.
\vskip12pt
\textbf{Claim 1:} For each $x\in G$, there exist $F\in\sF_G$ with $x\in F$, $\sU\in\sU_F$ and $\xi<\kappa$ such that $(\cap\sU)\cap B_\xi=\emptyset$. 

\begin{proof}[\textbf{Proof:}] 
Let $x\in G$. Then there is $\eta<\kappa^+$ such that $x\in G_\eta$. It follows from (4) that there exists a maximal finitely non-Hausdorff subset $M_x$ of $X$ such that $x\in M_x\subset G_{\eta+1}$. Since $y\notin G$ we have $y\notin M_x$. Thus, there exists a finite set $F_x\subset M_x$, $\sU'\in\sU_{F_x}$ and $\xi<\kappa$ such that $(\cap\sU')\cap B_\xi=\emptyset$. Therefore for the finitely non-Hausdorff subset $F:=F_x\cup\{x\}\subset M_x$ of $\sF_G$ there exists $\sU\in\sU_{F}$ such that $(\cap\sU)\cap B_\xi=\emptyset$.
\end{proof}

\textbf{Claim 2:} $G\subseteq\cup_{\xi<\kappa}\overline{\cup\sW_\xi}$. 

\begin{proof}[\textbf{Proof:}] 
Let $x\in G$. It follows from Claim 1 that there exists $F\in\sF_G$ with $x\in F$, $\sU\in\sU_F$ and $\xi<\kappa$ such that $(\cap\sU)\cap B_\xi=\emptyset$. Then it follows from Lemma \ref{L1} that $F\subset\overline{\cap\sU}$, hence $x\in\overline{\cup\sW_\xi}$ and therefore $x\in\cup_{\xi<\kappa}\overline{\cup\sW_\xi}$.
\end{proof}

Since $c(X)\le\kappa$, there exists $\sG_\xi\subseteq\sW_\xi$ with $|\sG_\xi|\le\kappa$ such that $\cup\sW_\xi\subseteq\overline{\cup\sG_\xi}$. Let $W_\xi=\cup\sG_\xi$. Then $G\subseteq\cup_{\xi<\kappa}\overline{W}_\xi$ and $y\notin \cup_{\xi<\kappa}\overline{W}_\xi$. Let $\sF:=\{F:$ there is $\sU\in \sU_F$ and $\xi<\kappa$ such that $\cap\sU\in\sG_\xi\}$. Since $|\sF|\le\kappa$, we have $|\cup\sF|\le\kappa$. Clearly $\cup\sF\subset G$. Thus, we can find $\eta<\kappa^+$ such that $\cup\sF\subset \sF_{G_\eta}$. Then it follows from (5) that $G_{\eta+1}\setminus(\cup_{\xi<\kappa}\overline{W}_\xi)\ne\emptyset$. This contradicts $G\subseteq\cup_{\xi<\kappa}\overline{W}_\xi$.
\end{proof}

\begin{corollary}\label{C9}
Let $X$ be a space with $nh(X)\le 2^{c(X)\chi(X)}$. Then $|X|\le 2^{c(X)\chi(X)}$.
\end{corollary}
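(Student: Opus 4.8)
The plan is to derive this directly from Theorem \ref{TIG1} by a short cardinal-arithmetic computation. Write $\kappa := c(X)\chi(X)$; by the definitions of cellularity and character recalled in the introduction, $\kappa$ is an infinite cardinal, since $c(X)\ge\omega$ and $\chi(X)\ge\omega$ force their product to be infinite. Theorem \ref{TIG1} gives $|X|\le nh(X)^{\kappa}$, so it suffices to show that $nh(X)^{\kappa}\le 2^{\kappa}$ under the standing hypothesis $nh(X)\le 2^{\kappa}$.

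First I would invoke monotonicity of cardinal exponentiation in the base: from $nh(X)\le 2^{\kappa}$ we obtain $nh(X)^{\kappa}\le (2^{\kappa})^{\kappa}$. Next I would collapse the iterated exponential using the exponent law $(2^{\kappa})^{\kappa}=2^{\kappa\cdot\kappa}$. Finally, the idempotence of infinite cardinal multiplication, $\kappa\cdot\kappa=\kappa$ (valid precisely because $\kappa$ is infinite), yields $2^{\kappa\cdot\kappa}=2^{\kappa}$. Chaining these estimates gives $|X|\le nh(X)^{\kappa}\le (2^{\kappa})^{\kappa}=2^{\kappa\cdot\kappa}=2^{\kappa}=2^{c(X)\chi(X)}$, which is the desired bound.

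There is essentially no obstacle here: the entire content of the statement is carried by Theorem \ref{TIG1}, and the corollary is a one-line consequence of it. The only point that deserves a moment's care is verifying that $\kappa=c(X)\chi(X)$ is infinite, so that the identity $\kappa\cdot\kappa=\kappa$ may be applied; this is immediate from the conventions adopted in the paper, under which both $c(X)$ and $\chi(X)$ are at least $\omega$.
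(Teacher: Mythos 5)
Your proof is correct and is exactly the argument the paper intends: the paper states this corollary without proof as an immediate consequence of Theorem \ref{TIG1}, and the cardinal-arithmetic chain $nh(X)^{\kappa}\le(2^{\kappa})^{\kappa}=2^{\kappa\cdot\kappa}=2^{\kappa}$ is the whole content. One negligible imprecision: under the paper's definitions only $c(X)$ is guaranteed to be at least $\omega$ (via the $+\,\omega$ in its definition), not $\chi(X)$, but that alone already makes $\kappa=c(X)\chi(X)$ infinite, so your use of $\kappa\cdot\kappa=\kappa$ is justified.
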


Corollary \ref{C9} answers in the affirmative the following question of Bonanzinga (see \cite[Question 55]{Bon13}): Is $|X|\le 2^{c(X)\chi(X)}$ true for every $X$ such that $H(X)$ is finite? It also greatly improves her Corollary 54 that states that for every 3-Hausdorff space $X$, $|X|\le 2^{2^{c(X)\chi(X)}}$.

The following example shows that Hajnal--Juh\'asz inequality is not always true for $T_1$-spaces and that the cardinal number $nh(X)$ in the inequality in Theorem \ref{TIG1} cannot be replaced by $2$. For a different example of a $T_1$-space for which Hajnal--Juh\'asz inequality is not true see \cite[Example 13]{Bon13}.

\begin{example}[see {\cite[Example 2.1]{Got14}}]\label{E2}
Let $\mathbb{N}$ denote the set of all positive integers and  
$\mathbb{R}$ be the set of all real numbers. Let 
$S:=\{1/n: n\in \mathbb{N}\}$ and $M:=S\cup\{0\}$ be the subspace of 
$\mathbb{R}$ with the inherited topology. Then in $M$ all points except $0$ 
are isolated and $\lim_{n\rightarrow\infty}1/n=0$. Let $\alpha$ be an infinite 
initial ordinal. We duplicate $\alpha$ many times the point $0\in M$; i.e. we 
replace in $M$ the point $0$ with $\alpha$ many distinct points and obtain 
the set $X:=S\cup\alpha$ with topology such that, for each 
$\beta<\alpha$, we have $\beta\in\lim_{n\rightarrow\infty}1/n$ and the 
subspaces $S$ and $\alpha$ with the inherited topology from $X$ are 
discrete. Then $X$ is $T_1$ (but not Hausdorff) ccc-space, $\chi(X)=\omega$, and $nh(X)=\alpha$. Therefore if $\alpha>2^\omega$ is a cardinal for which $\alpha^\omega=\alpha$ then $|X|=nh(X)^{c(X)\chi(X)}=\alpha^\omega=\alpha>2^\omega$.
\end{example}

Another well-known inequality of Hajnal and Juh\'asz is contained in the next theorem.

\begin{theorem}[Hajnal-Juh\'asz]\label{THJ2}
If $X$ is a Hausdorff space then $|X|\le 2^{2^{s(X)}}$.
\end{theorem}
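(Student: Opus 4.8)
The plan is to factor the bound through the hereditary Lindel\"of degree $hL(X)$, deducing the theorem from two classical ingredients: first, that $hL(X)\le 2^{s(X)}$ for an arbitrary space, and second, de Groot's inequality $|X|\le 2^{hL(X)}$ for Hausdorff spaces. Composing these gives $|X|\le 2^{hL(X)}\le 2^{(2^{s(X)})}=2^{2^{s(X)}}$, exactly the desired estimate. Throughout I write $\kappa=s(X)$.

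For the first ingredient I would argue through right-separated subspaces, using the standard fact that $hL(X)$ equals (up to the additive $\omega$) the supremum of the cardinalities of right-separated subspaces of $X$, while $s(X)$ is the supremum of the cardinalities of discrete subspaces. It therefore suffices to show that $X$ has no right-separated subspace of cardinality $(2^{\kappa})^{+}$. Suppose $\{x_\alpha:\alpha<(2^{\kappa})^{+}\}$ were such a subspace, witnessed by open sets $U_\alpha\ni x_\alpha$ with $U_\alpha\cap\{x_\gamma:\gamma>\alpha\}=\emptyset$. The tool is the Erd\H{o}s--Rado partition relation $(2^{\kappa})^{+}\to(\kappa^{+})^{2}_{2}$: color a pair $\{\alpha,\beta\}$ with $\alpha<\beta$ according to whether $x_\alpha\in U_\beta$, obtaining a homogeneous set $H$ with $|H|=\kappa^{+}$. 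When the homogeneous color records $x_\alpha\notin U_\beta$, each $U_\beta$ excludes every later point and (by homogeneity) every earlier point of $H$, so $\{x_\beta:\beta\in H\}$ is a discrete subspace of size $\kappa^{+}>s(X)$, a contradiction. I expect the delicate point to be the opposite homogeneous color, in which the extracted set is again (strongly) right-separated and yields no immediate contradiction; handling it cleanly --- by passing to the left-separated dual, or by iterating the extraction --- is the main obstacle of the whole argument.

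For de Groot's inequality set $\kappa'=hL(X)$ and first bound the closed pseudocharacter of $X$ by $\kappa'$. Fix $x\in X$; for every $y\ne x$ choose, by Hausdorffness, disjoint open sets $U_y\ni x$ and $V_y\ni y$. The family $\{V_y:y\ne x\}$ covers $X\setminus\{x\}$, which is $\kappa'$-Lindel\"of, so there is $Y_0$ with $|Y_0|\le\kappa'$ and $X\setminus\{x\}\subseteq\bigcup_{y\in Y_0}V_y$. For $y'\ne x$ pick $y\in Y_0$ with $y'\in V_y$; then $V_y$ is an open neighborhood of $y'$ disjoint from $U_y$, so $y'\notin\overline{U_y}$. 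Hence $\bigcap_{y\in Y_0}\overline{U_y}=\{x\}$, i.e. each point is the intersection of $\le\kappa'$ closed neighborhoods. A closing-off construction --- in the spirit of the chain $\{G_\eta\}$ built in the proof of Theorem \ref{TIG1} --- then produces an increasing sequence of subsets of $X$, each of size $\le 2^{\kappa'}$, whose union $H$ must be all of $X$: a point $y\notin H$ would, for each $x\in H$, admit a closed neighborhood of $x$ missing $y$, and a $\kappa'$-sized subcover of $H$ by the corresponding open neighborhoods (again using $hL(X)\le\kappa'$) would already have been captured at some stage, contradicting $y\notin H$. Therefore $|X|=|H|\le 2^{\kappa'}=2^{hL(X)}$.

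Combining the two ingredients completes the proof, and I would present it either in this factored form or, mirroring Theorem \ref{TIG1}, as a single closing-off construction in which the role played by $c(X)$ there is played here by the hereditary Lindel\"of degree. The genuinely hard step is the partition-calculus lemma $hL(X)\le 2^{s(X)}$; the de Groot step is a routine closing-off once the closed pseudocharacter has been bounded by $hL(X)$.
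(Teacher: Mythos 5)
Your factorization --- de Groot's inequality $|X|\le 2^{hL(X)}$ for Hausdorff spaces composed with $hL(X)\le 2^{s(X)}$ --- is a genuinely different route from the paper's: the paper obtains Theorem \ref{THJ2} by combining Lemma \ref{L5} ($\psi(X)\le 2^{s(X)}$ for Hausdorff spaces, via \v{S}apirovski{\u\i}'s Lemma \ref{L4}) with Theorem \ref{THJ1} ($|X|\le 2^{s(X)\psi(X)}$ for $T_1$-spaces), exactly the composition that reappears in the proof of Theorem \ref{TIG2}. Your de Groot half is essentially correct (the bound on closed pseudocharacter by $hL$ plus a routine closing-off). The problem is the other half, and it is a genuine gap, not a loose end. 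First, the lemma as you state it, ``$hL(X)\le 2^{s(X)}$ for an arbitrary space,'' is false: on $\lambda=(2^{\omega})^{+}$ take the topology whose open sets are the initial segments. Any two nonempty open sets meet, so no discrete subspace has more than one point and $s(X)=\omega$; yet the open cover by the sets $[0,\alpha]$ has no subcover of size $\le 2^{\omega}$ (since $\lambda$ is regular), so $hL(X)\ge L(X)\ge(2^{\omega})^{+}>2^{s(X)}$. Hausdorffness is therefore indispensable in this ingredient too, not only in de Groot's step.

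Second, and decisively: your Erd\H{o}s--Rado argument never uses Hausdorffness --- it uses only the right-separating neighborhoods --- so it cannot possibly be completed as written, since a separation-free argument would prove the false statement above. Concretely, the homogeneous color you flag ($x_\alpha\in U_\beta$ for $\alpha<\beta$) reproduces exactly the structure of the counterexample: the trace of $U_\beta$ on the homogeneous set $H$ is precisely the initial segment $\{x_\alpha:\alpha\in H,\ \alpha\le\beta\}$, so $H$ is again merely right-separated and no contradiction with $s(X)\le\kappa$ is available without invoking $T_2$. Neither of your proposed repairs works as stated: the left-separated dual coloring has a symmetric bad color, and ``iterating the extraction'' is blocked because Erd\H{o}s--Rado needs $(2^{\kappa})^{+}$ points while $H$ has only $\kappa^{+}$ (and $\kappa^{+}\not\to(\kappa^{+})^{2}_{2}$). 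Using Hausdorffness to dispose of the bad color is the actual content of the Hajnal--Juh\'asz lemma $hL(X)\le 2^{s(X)}$ for $T_2$-spaces, and it is exactly what is missing; you acknowledge this yourself by calling it ``the main obstacle of the whole argument.'' Note that the paper's route avoids the issue entirely: in the proof of Theorem \ref{THJ1} one colors a pair by the pseudobase indices witnessing that each point escapes a neighborhood of the other, so every homogeneous set is automatically discrete --- there is no bad color to handle. As it stands, your proposal establishes the easy half of the theorem and leaves the hard half open.
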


In Theorem \ref{TIG2} we generalize Theorem \ref{THJ2} for the class of $T_1$-spaces.
In the proof of Theorem \ref{TIG2} we will need the following three results (see \cite{Juh80} or \cite{Hod84}):

\begin{lemma}[\v{S}apirovski{\u\i}]\label{L4}
Let $\sU$ be an open cover of a space $X$ with $s(X)\le\kappa$. Then there is a subset $A$ of $X$ with $|A|\le \kappa$ and a subcollection $\sW$ of $\sU$ with $|\sW|\le\kappa$ such that $X=\overline{A}\cup(\cup\sW)$ .
\end{lemma}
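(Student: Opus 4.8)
The plan is to build, by transfinite recursion, a set $A=\{x_\alpha:\alpha<\lambda\}$ of points of $X$ together with a subcollection $\{U_\alpha:\alpha<\lambda\}$ of $\sU$, choosing at each stage a point that escapes everything selected so far. Precisely, having chosen $x_\beta$ and $U_\beta$ for all $\beta<\alpha$, set $A_\alpha=\{x_\beta:\beta<\alpha\}$ and consider the set $\overline{A_\alpha}\cup\bigcup_{\beta<\alpha}U_\beta$. If this set equals $X$, stop and put $\lambda=\alpha$; otherwise pick $x_\alpha\in X\setminus(\overline{A_\alpha}\cup\bigcup_{\beta<\alpha}U_\beta)$ and, using that $\sU$ is a cover, pick $U_\alpha\in\sU$ with $x_\alpha\in U_\alpha$. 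At limit stages no special step is needed, since the expressions $\overline{A_\alpha}$ and $\bigcup_{\beta<\alpha}U_\beta$ already incorporate all earlier choices.

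The heart of the argument, and the step I expect to be the main obstacle, is to show that the recursion cannot run for $\kappa^+$ steps; this I would do by proving that $A=\{x_\alpha:\alpha<\lambda\}$ is a discrete subspace of $X$. Fix $\alpha$. On the one hand, because $x_\alpha\notin\overline{A_\alpha}$, there is an open $V_\alpha\ni x_\alpha$ with $V_\alpha\cap\{x_\beta:\beta<\alpha\}=\emptyset$, so $V_\alpha$ separates $x_\alpha$ from all earlier points. On the other hand, for every $\beta>\alpha$ the choice of $x_\beta$ forces $x_\beta\notin\bigcup_{\gamma<\beta}U_\gamma\supseteq U_\alpha$, so the open set $U_\alpha\ni x_\alpha$ contains no later point. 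Hence $V_\alpha\cap U_\alpha$ is an open neighborhood of $x_\alpha$ meeting $A$ only in $x_\alpha$, which shows that $A$ is discrete. The delicate point to get right is that this uses \emph{both} defining conditions on the $x_\alpha$: the closure condition disposes of the predecessors, while the cover-member condition disposes of the successors, and neither alone suffices.

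Since $s(X)\le\kappa$, every discrete subspace of $X$ has cardinality at most $\kappa$; and as the points $x_\alpha$ are pairwise distinct (each $x_\alpha$ avoids $A_\alpha$), we obtain $|A|=|\lambda|\le\kappa$, so in particular $\lambda<\kappa^+$ and the recursion must terminate. Termination means precisely that at stage $\lambda$ we had $\overline{A_\lambda}\cup\bigcup_{\beta<\lambda}U_\beta=X$. Setting $A=A_\lambda$ and $\sW=\{U_\beta:\beta<\lambda\}$ then yields $|A|\le\kappa$, $|\sW|\le\kappa$, and $X=\overline{A}\cup(\cup\sW)$, as required.
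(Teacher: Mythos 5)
The paper does not prove this lemma at all: it quotes it as a known result of \v{S}apirovski{\u\i}, citing \cite{Juh80} and \cite{Hod84}. Your argument is correct and is in fact the standard proof found in those references: transfinite recursion picking $x_\alpha\notin\overline{A_\alpha}\cup\bigcup_{\beta<\alpha}U_\beta$, with the recursion length bounded by showing $\{x_\alpha:\alpha<\lambda\}$ is discrete. You also correctly isolate the one delicate verification, namely that discreteness genuinely needs both defining conditions --- $V_\alpha$ (from $x_\alpha\notin\overline{A_\alpha}$) excludes the predecessors while $U_\alpha$ excludes the successors, so $V_\alpha\cap U_\alpha$ isolates $x_\alpha$ in $A$ --- and the spread hypothesis then forces termination at some $\lambda$ with $|\lambda|\le\kappa$, which is exactly the conclusion $X=\overline{A}\cup(\cup\sW)$ with $|A|,|\sW|\le\kappa$.
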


\begin{lemma}\label{L5}
If $X$ is a Hausdorff space then $\psi(X)\le 2^{s(X)}$.
\end{lemma}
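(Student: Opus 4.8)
The plan is to prove the pointwise statement $\psi(x,X)\le 2^{s(X)}$ for every $x\in X$ and then take the supremum over $x$. Write $\kappa=s(X)$, fix $x$, and exploit the Hausdorff hypothesis: for each $y\in X\setminus\{x\}$ choose an open $U_y\ni y$ together with a disjoint open neighborhood of $x$, so that $x\notin\overline{U_y}$. Then $\{U_y:y\ne x\}$ is an open cover of the subspace $Y:=X\setminus\{x\}$, and each closure $\overline{U_y}$ is a closed subset of $X$ omitting $x$. If I can select a subfamily $\{U_{y_i}:i<2^\kappa\}$ that still covers $Y$, then the closures $\overline{U_{y_i}}$ also cover $Y$ (each lies in $Y$, since $x\notin\overline{U_{y_i}}$), whence $\{x\}=\bigcap_{i<2^\kappa}\bigl(X\setminus\overline{U_{y_i}}\bigr)$ exhibits $\{x\}$ as an intersection of $2^\kappa$ open sets and so $\psi(x,X)\le 2^\kappa$. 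Thus the whole problem reduces to producing a subcover of size $\le 2^\kappa$, that is, to $L(Y)\le 2^\kappa$, and more generally to the spread inequality $hL(X)\le 2^{s(X)}$ for Hausdorff $X$.

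The decisive second step is therefore to establish $hL(X)\le 2^{\kappa}$. I would argue by contradiction: if some subspace $Z\subseteq X$ had an open cover $\sV$ with no subcover of cardinality $\le 2^\kappa$, then by transfinite recursion I can build points $y_\alpha\in Z$ and members $U_\alpha\in\sV$ for $\alpha<(2^\kappa)^+$ with $y_\alpha\in U_\alpha$ and $y_\alpha\notin U_\beta$ for every $\beta<\alpha$ (at each stage at most $2^\kappa$ sets have been chosen, so they cannot cover $Z$). Applying the Erd\H{o}s--Rado partition relation $(2^\kappa)^+\to(\kappa^+)^2_2$ to the $2$-colouring of pairs $\{\beta,\alpha\}$ with $\beta<\alpha$ that records whether $y_\beta\in U_\alpha$, I obtain a homogeneous set $H$ of order type $\kappa^+$. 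On $H$ the relations $y_\alpha\notin U_\beta$ from the construction, together with the homogeneous colour, should produce for each $\alpha$ a neighborhood isolating $y_\alpha$ from the remaining points of $\{y_\gamma:\gamma\in H\}$, yielding a discrete subspace of size $\kappa^+>\kappa=s(X)$ --- a contradiction. Combining $\psi(X)\le hL(X)$ with $hL(X)\le 2^{s(X)}$ then gives the lemma.

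The main obstacle is the colour class in which every earlier point lies in every later neighborhood: there the chosen members of $\sV$ alone do not separate the $y_\alpha$, and the naive extraction of a discrete set stalls. This is not a mere technicality, since for merely $T_0$ spaces the inequality $hL(X)\le 2^{s(X)}$ genuinely fails; hence the Hausdorff hypothesis must re-enter precisely here. I would handle the difficult colour by refining the colouring so that it records separation of $y_\beta$ from $\overline{U_\alpha}$ (equivalently, by running the recursion with neighborhoods whose closures witness Hausdorff separation), so that homogeneity in either colour forces mutually disjoint neighborhoods and therefore a genuine discrete subspace. Carrying out this refinement so that both homogeneous cases deliver the discrete set is the technical heart of the argument.
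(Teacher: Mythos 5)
Your first step is sound: with $\kappa=s(X)$ and the Hausdorff choice of open sets $U_y\ni y$ with $x\notin\overline{U_y}$, any subfamily of size $\le 2^\kappa$ covering $X\setminus\{x\}$ does give $\psi(x,X)\le 2^\kappa$, and this is the same use of the Hausdorff axiom as in the standard argument. The genuine gap is in the second step. You reduce the lemma to the far stronger statement $hL(X)\le 2^{s(X)}$ and then sketch an Erd\H{o}s--Rado proof of that, but the sketch fails exactly at the point you yourself flag as the technical heart, and the repair you propose does not work under the Hausdorff hypothesis. In the bad homogeneous colour ($y_\alpha\in U_\beta$ for all $\alpha<\beta$ in $H$) one needs, for each $\alpha\in H$, a \emph{single} open neighborhood of $y_\alpha$ avoiding the $\kappa^+$ many other points of $H$; Hausdorffness only provides separating sets that depend on pairs, and a colouring of pairs cannot convert pairwise witnesses into one neighborhood working against $\kappa^+$ points. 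Your proposed fix --- recording whether $y_\beta\in\overline{U_\alpha}$, or ``running the recursion with neighborhoods whose closures witness separation'' --- is precisely the move that works in \emph{regular} spaces: there one shrinks each $U_\alpha$ to $V_\alpha\ni y_\alpha$ with $\overline{V_\alpha}\subseteq U_\alpha$, so the construction's property $y_\gamma\notin U_\alpha$ for $\gamma>\alpha$ automatically gives $y_\gamma\notin\overline{V_\alpha}$, and then \emph{both} homogeneous colours yield a discrete subspace of size $\kappa^+$. In a merely Hausdorff space no such shrinking is available, and the homogeneous pattern ``$y_\alpha\in U_\beta$ and $y_\beta\in\overline{U_\alpha}$ for all $\alpha<\beta$ in $H$'' survives your refined colouring without producing either a discrete set or a contradiction. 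So the inequality your whole argument rests on is left unproved, and the method you outline for it establishes it only under regularity, which is not the hypothesis of the lemma.

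For comparison: the paper does not prove Lemma \ref{L5} (it is quoted from the literature), but Lemma \ref{L4} (\v{S}apirovski{\u\i}), stated immediately before it, is the engine of the standard proof --- and of the paper's own generalization, Lemma \ref{L6}. The crucial difference from your plan is that Lemma \ref{L4} does \emph{not} assert the existence of a genuine subcover of size $2^\kappa$ (that would be your $hL(X)\le 2^{s(X)}$); applied to the subspace $X\setminus\{x\}$ and the cover $\{U_y:y\ne x\}$, it gives sets $A$ and $B$ of size $\le\kappa$ with $X\setminus\{x\}\subseteq\overline{A}\cup\bigcup_{y\in B}U_y$, i.e.\ a $\kappa$-sized subfamily covering everything except the residue $\overline{A}$. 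The cardinal $2^\kappa$ then enters only by counting: the family $\{X\setminus\overline{U_y}:y\in B\}\cup\{X\setminus\overline{S}:S\subseteq A,\ x\notin\overline{S}\}$ consists of at most $\kappa+2^\kappa$ open neighborhoods of $x$, and its intersection is $\{x\}$, since a point $z\ne x$ either lies in some $U_y$ with $y\in B$, or lies in $\overline{A}$, in which case $S:=U_z\cap A$ satisfies $z\in\overline{S}$ while $x\notin\overline{S}$ (because $S\subseteq U_z$ and $x\notin\overline{U_z}$). This is elementary and uses no partition calculus; replacing it by $hL(X)\le 2^{s(X)}$ trades the lemma for a strictly stronger and much harder theorem, which your sketch does not deliver.
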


\begin{theorem}[Hajnal-Juh\'asz] \label{THJ1}
If $X$ is a $T_1$-space then $|X|\le 2^{s(X)\psi(X)}$.
\end{theorem}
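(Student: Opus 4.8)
The plan is to imitate the closing-off construction used in the proof of Theorem \ref{TIG1}, replacing the local bases by pseudobases and feeding \v{S}apirovski{\u\i}'s Lemma \ref{L4} into the successor step. Put $\kappa=s(X)\psi(X)$. Since $X$ is $T_1$ and $\psi(X)\le\kappa$, for every $x\in X$ I fix a family $\{U(x,\alpha):\alpha<\kappa\}$ of open neighborhoods of $x$ with $\cap\{U(x,\alpha):\alpha<\kappa\}=\{x\}$; this is the only place the $T_1$ axiom is used, and it is exactly what lets me separate two distinct points on one side (namely $y\notin U(x,\alpha)$ for a suitable $\alpha$) without appealing to Hausdorffness.

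I would then build an increasing continuous chain $\{H_\eta:\eta<\kappa^+\}$ of subsets of $X$ with $H_0=\{x_0\}$, unions at limits, and $|H_\eta|\le 2^\kappa$, governed by the following reflection property at successor stages: for every $Y,A'\in[H_\eta]^{\le\kappa}$ and every $g\colon Y\to\kappa$, writing $W(Y,g)=\cup\{U(y,g(y)):y\in Y\}$, if $X\setminus(\overline{A'}\cup W(Y,g))\ne\emptyset$ then $H_{\eta+1}$ is required to meet $X\setminus(\overline{A'}\cup W(Y,g))$. The number of triples $(Y,g,A')$ is at most $(2^\kappa)^\kappa\cdot\kappa^\kappa=2^\kappa$, so adding one new point per requirement keeps $|H_{\eta+1}|\le 2^\kappa$, and since $\kappa^+$ is regular the chain has length $\kappa^+$ with $G:=\cup_{\eta<\kappa^+}H_\eta$ of size $\le 2^\kappa$.

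The end-game is to show $G=X$. Suppose instead $y\in X\setminus G$. For each $x\in G$ the choice $\cap_\alpha U(x,\alpha)=\{x\}$ and $y\ne x$ give $\alpha(x)<\kappa$ with $y\notin U(x,\alpha(x))$, so $\{U(x,\alpha(x)):x\in G\}$ is an open cover of the subspace $G$. Since $s(G)\le s(X)\le\kappa$, Lemma \ref{L4} yields $A',B\in[G]^{\le\kappa}$ with $G\subseteq\overline{A'}\cup\cup\{U(x,\alpha(x)):x\in B\}$. Choosing $\xi<\kappa^+$ with $A'\cup B\subseteq H_\xi$ and applying the reflection property to $Y=B$, $g=\alpha\restriction B$, and this $A'$, one obtains a point $z\in H_{\xi+1}\subseteq G$ lying outside $\overline{A'}\cup\cup\{U(x,\alpha(x)):x\in B\}$, contradicting the displayed inclusion --- \emph{provided} the reflection hypothesis is met, i.e. provided $X\setminus(\overline{A'}\cup\cup\{U(x,\alpha(x)):x\in B\})$ is nonempty, which holds as soon as $y$ itself lies in it.

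The main obstacle is precisely this last point: $y$ avoids each $U(x,\alpha(x))$ by construction, but \v{S}apirovski{\u\i}'s lemma only covers $G$ modulo the closed residue $\overline{A'}$, and a priori $y$ could lie in $\overline{A'}$. I expect to close the gap by arranging in the construction that $G$ is closed, so that $A'\subseteq G$ forces $\overline{A'}\subseteq G$ and hence $y\notin\overline{A'}$; the delicate part is verifying that the extra closure-capturing requirements keep every $|H_\eta|$ bounded by $2^\kappa$, and this is exactly where the hypothesis $s(X)\le\kappa$ does its real work. (Note that Lemma \ref{L5} plays no role in this argument; it enters only when the present inequality is combined with the Hausdorff bound on $\psi$ to recover Theorem \ref{THJ2}.)
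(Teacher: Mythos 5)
A preliminary remark: the paper does not prove Theorem \ref{THJ1} at all --- it quotes it from \cite{Juh80} and \cite{Hod84}, where the proof (going back to Hajnal and Juh\'asz \cite{HajJuh67}) is a partition-calculus argument, not a closing-off argument. So the question is whether your closing-off scheme can be completed, and it cannot, for exactly the reason you flag in your last paragraph --- but the step you postpone there is not a ``delicate part''; it is the whole theorem. To make $G$ closed (or even just $\kappa$-closed) while keeping $|H_\eta|\le 2^\kappa$, you must add $\overline{A}$ to the chain for every $A\in[H_\eta]^{\le\kappa}$, and for the counting to survive you need the lemma: \emph{in a $T_1$ space with $s(X)\psi(X)\le\kappa$, every $A$ with $|A|\le\kappa$ has $|\overline{A}|\le 2^\kappa$}. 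Since spread and pseudocharacter are both monotone under subspaces, $\overline{A}$ is itself a $T_1$ space with $s\cdot\psi\le\kappa$ (and density $\le\kappa$), so this lemma \emph{is} Theorem \ref{THJ1} applied to the subspace $\overline{A}$; conversely, granted the lemma, your construction goes through. Thus your plan reduces the theorem to its special case for spaces of density $\le\kappa$, and for precisely that case a closing-off of your shape proves nothing: when $X=\overline{A'}$ for some $A'\in[G]^{\le\kappa}$, the hypothesis $X\setminus(\overline{A'}\cup W)\ne\emptyset$ of the relevant reflection requirement is never satisfied, so the chain never reflects a witness, and the possibility $y\in\overline{A'}$ genuinely kills the endgame. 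There is no local patch; a new ingredient is required.

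That ingredient, in the sources the paper cites, is the Erd\H{o}s--Rado theorem $(2^\kappa)^+\to(\kappa^+)^2_\kappa$, and it consumes the spread hypothesis in a completely different way than \v{S}apirovski{\u\i}'s Lemma \ref{L4} does. With $\kappa=s(X)\psi(X)$, fix $\{V(x,\alpha):\alpha<\kappa\}$ with $\bigcap_{\alpha<\kappa}V(x,\alpha)=\{x\}$ (this is where $T_1$ and $\psi(X)\le\kappa$ enter), well-order $X$, and color each pair $x<y$ by the least pair $(\alpha,\beta)\in\kappa\times\kappa$ with $y\notin V(x,\alpha)$ and $x\notin V(y,\beta)$. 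If $|X|>2^\kappa$, Erd\H{o}s--Rado yields a homogeneous set $S$ of size $\kappa^+$, say with color $(\alpha_0,\beta_0)$; then for each $x\in S$ the open set $V(x,\alpha_0)\cap V(x,\beta_0)$ meets $S$ only in $x$, so $S$ is a discrete subspace of size $\kappa^+$, contradicting $s(X)\le\kappa$. Note that Lemma \ref{L4} plays no role in this proof (in the paper it is used for Lemma \ref{L6}); your instinct to imitate Theorem \ref{TIG1} is natural, but the closure method and the partition method are not interchangeable here, and this theorem lives on the partition side.
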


In order to extend Lemma \ref{L5} to the class of all $T_1$-spaces we need to introduce a new cardinal function.

\begin{definition}
Let $X$ be a space. We define the \emph{non-Urysohn number of $X$ with respect to the singletons}, $nu_s(X)$, as follows: $nu_s(X):=1+\sup\{\cl_\theta(\{x\}):x\in X\}$.
\end{definition}

Clearly if $X$ is a Hausdorff space then  $nu_s(X)=2$ and for every space $nu_s(X)\le nu(X)$ and $nu_s(X)\ge nh(X)$ (see Corollary \ref{C5} and Corollary \ref{C7}).

\begin{lemma}\label{L6}
If $X$ is a $T_1$-space then $\psi(X)\le nu_s(X)\cdot 2^{s(X)}$.
\end{lemma}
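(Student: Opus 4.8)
The plan is to bound the pseudocharacter pointwise: for each $p\in X$ I will exhibit a family $\sV_p$ of open neighborhoods of $p$ with $\cap\sV_p=\{p\}$ and $|\sV_p|\le nu_s(X)\cdot 2^{s(X)}$, which immediately yields $\psi(X)\le nu_s(X)\cdot 2^{s(X)}$. The guiding idea is to separate $p$ from all points \emph{outside} its $\theta$-closure by means of closed neighborhoods (this is the only place the spread will be used), to notice that the resulting intersection is then trapped inside $\cl_\theta(\{p\})$, and finally to use the $T_1$ hypothesis to delete the at most $nu_s(X)$-many surviving points of $\cl_\theta(\{p\})$ one at a time.

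First I would handle the separation outside the $\theta$-closure. By Proposition \ref{P4} we have $\cl_\theta(\{p\})=\cap\{\overline{B}:B\in\sB_p\}$, so for every $x\in X\setminus\cl_\theta(\{p\})$ there is an open neighborhood $U_x$ of $p$ (a member of $\sB_p$) with $x\notin\overline{U_x}$. Consequently the family $\{X\setminus\overline{U_x}:x\in X\setminus\cl_\theta(\{p\})\}$ is an open cover of the subspace $Y:=X\setminus\cl_\theta(\{p\})$.

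Next comes the key step, where the spread enters. Since spread is monotone and the Lindelöf degree of any space is bounded by $2$ raised to its spread — the standard consequence of \v{S}apirovski{\u\i}'s Lemma \ref{L4} obtained by the usual transfinite closing-off of length $(2^{s(X)})^+$ — every open cover of $Y$ admits a subcover of cardinality at most $2^{s(Y)}\le 2^{s(X)}$. I would fix such a set $C\subseteq Y$ with $|C|\le 2^{s(X)}$ so that $\{X\setminus\overline{U_x}:x\in C\}$ still covers $Y$. Then every point $z\notin\cl_\theta(\{p\})$ lies outside some $U_x$ with $x\in C$, so $\cap_{x\in C}U_x\subseteq\cl_\theta(\{p\})$, whereas $p\in\cap_{x\in C}U_x$ because each $U_x$ is a neighborhood of $p$. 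Finally, since $X$ is $T_1$ and $|\cl_\theta(\{p\})|<nu_s(X)$, for each $z\in\cl_\theta(\{p\})\setminus\{p\}$ the set $X\setminus\{z\}$ is an open neighborhood of $p$; putting $\sV_p:=\{U_x:x\in C\}\cup\{X\setminus\{z\}:z\in\cl_\theta(\{p\})\setminus\{p\}\}$ gives $\cap\sV_p=\{p\}$ and $|\sV_p|\le 2^{s(X)}+nu_s(X)\le nu_s(X)\cdot 2^{s(X)}$, as $2^{s(X)}$ is infinite.

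The hard part will be the key step, namely obtaining a subcover of size only $2^{s(X)}$ rather than the ``small set plus its closure'' that a single application of Lemma \ref{L4} produces; this is the sole source of the exponential and is precisely the standard iteration of \v{S}apirovski{\u\i}'s lemma. Everything else is bookkeeping, and the $T_1$ hypothesis is needed only to strip off the fewer than $nu_s(X)$ points of $\cl_\theta(\{p\})$ distinct from $p$, which is exactly why the factor $nu_s(X)$ appears in the bound and why the inequality reduces to Lemma \ref{L5} when $X$ is Hausdorff (so that $\cl_\theta(\{p\})=\{p\}$ and $nu_s(X)=2$).
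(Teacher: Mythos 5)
Your overall architecture (separate $p$ from $X\setminus\cl_\theta(\{p\})$ by closed neighborhoods, then use $T_1$ to strip off the at most $nu_s(X)$ remaining points of $\cl_\theta(\{p\})$) is the same as the paper's, but the step you yourself single out as the key one is a genuine gap. The principle you invoke --- that the Lindel\"of degree of \emph{any} space is at most $2$ raised to its spread --- is not a theorem; it fails already for $T_1$ spaces, and $T_1$ is all that your $Y=X\setminus\cl_\theta(\{p\})$ is guaranteed to be. Concretely, let $\kappa=(2^\omega)^+$ and topologize the set $\kappa$ by taking as basic open sets all sets $[0,\alpha)\setminus F$ with $\alpha\le\kappa$ and $F$ finite. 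This space is $T_1$. Every discrete subspace $D$ is countable: if $x\in D$ and $[0,\alpha_x)\setminus F_x$ witnesses discreteness at $x$, then $D\cap[0,x]\subseteq F_x\cup\{x\}$ is finite, so $D$ has order type at most $\omega$; hence $s=\omega$. Yet the open cover $\{[0,\alpha):\alpha<\kappa\}$ has no subcover of cardinality less than $\kappa$, since $\kappa$ is regular; so $L\ge\kappa>2^\omega=2^{s}$. The familiar bounds of this kind (Lemma \ref{L5}, or $hL(X)\le 2^{s(X)}$) all require Hausdorffness, which is precisely the hypothesis this lemma is designed to avoid, and you give no argument that your particular cover of $Y$ behaves better than an arbitrary one.

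The same example shows why the ``usual transfinite closing-off'' of Lemma \ref{L4} cannot produce a genuine subcover: each application of \v{S}apirovski{\u\i}'s lemma leaves an error term $\overline{A}$ with $|A|\le s(X)$, and without separation axioms the closure of such a small set can be the whole space (above, $\overline{[0,\omega)}=\kappa$), so it may itself need more than $2^{s}$ members of the cover; iteration makes no progress. This is exactly why the paper never extracts a subcover. Instead it applies Lemma \ref{L4} once, to the cover of $Y$ by neighborhoods $U_y$ of the points $y\in Y$ chosen so that $x\notin\overline{U_y}$ (not to your cover by complements of closed neighborhoods of $p$), getting $Y\subseteq\overline{A}\cup\bigcup_{y\in B}U_y$ with $|A|,|B|\le s(X)$; the $\kappa$-many sets $X\setminus\overline{U_y}$, $y\in B$, take care of the second term, and the closure term is handled by a counting trick: for $y\in\overline{A}$ outside $\cl_\theta(\{x\})$, the open set $X\setminus\overline{U_y\cap A}$ is a neighborhood of $x$ excluding $y$, and there are at most $2^{|A|}\le 2^{s(X)}$ distinct such sets because each depends only on the trace $U_y\cap A\subseteq A$. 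That trace-counting argument, not a Lindel\"of-degree bound, is the true source of the exponential $2^{s(X)}$, and it is the piece missing from your proof.
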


\begin{proof}
Let $s(X)=\kappa$, $nu_s(X)=\tau$ and $x\in X$. Using the fact that $X$ is a $T_1$-space, for each $z\in \cl_\theta(\{x\})$ we can choose an open neighborhood $U_z$ of $x$ that does not contain $z$. Also, for each $y\notin \cl_\theta(\{x\})$ we can choose an open set $U_y$ such that $x\notin \overline{U_y}$. Then 
$\sU:=\{U_y:y\notin\cl_\theta(\{x\})\}$ is an open cover of $X\setminus\cl_\theta(\{x\})$. Therefore, according to Lemma \ref{L4}, there exist subsets $A$ and $B$ of $X\setminus\cl_\theta(\{x\})$ such that $|A|\le\kappa$, $|B|\le\kappa$ and $X\setminus\cl_\theta(\{x\})\subseteq\overline{A}\cup(\cup_{y\in B}U_y)$. Let $\sV:=\{U_z:z\in\cl_\theta(\{x\})\}$, $\sV_A:=\{X\setminus\overline{U_y\cap A}:y\in\overline{A}\setminus\{x\}\}$ and $\sV_B:=\{X\setminus\overline{U_y}:y\in B\}$. Then $\sV\cup\sV_A\cup\sV_B$ is a pseudobase for $X$ with cardinality $\le\tau+2^\kappa + \kappa\le \tau\cdot 2^\kappa$.
\end{proof}

\begin{corollary}
If $X$ is a $T_1$-space then $\psi(X)\le nu(X)\cdot 2^{s(X)}$.
\end{corollary}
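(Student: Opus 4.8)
The plan is to obtain this inequality as an immediate consequence of Lemma \ref{L6} together with the elementary comparison $nu_s(X)\le nu(X)$. First I would recall that Lemma \ref{L6} already establishes $\psi(X)\le nu_s(X)\cdot 2^{s(X)}$ for every $T_1$-space $X$, so it suffices to replace $nu_s(X)$ by the larger quantity $nu(X)$ on the right-hand side.

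Next I would justify that $nu_s(X)\le nu(X)$ holds for every space $X$. By Corollary \ref{C7}, for each $x\in X$ the set $\cl_\theta(\{x\})$ is a finitely non-Urysohn subset of $X$ containing $x$; hence $|\cl_\theta(\{x\})|$ is at most $\sup\{|A|:A$ is a finitely non-Urysohn subset of $X\}$. Taking the supremum over $x\in X$ and adding $1$ to both sides gives exactly $nu_s(X)\le nu(X)$ (this comparison was already recorded in the remark following the definition of $nu_s(X)$).

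Finally, since cardinal multiplication is monotone in each factor, from $nu_s(X)\le nu(X)$ we obtain $nu_s(X)\cdot 2^{s(X)}\le nu(X)\cdot 2^{s(X)}$. Chaining this with Lemma \ref{L6} yields $\psi(X)\le nu_s(X)\cdot 2^{s(X)}\le nu(X)\cdot 2^{s(X)}$, which is the asserted bound. There is no substantive obstacle here: essentially all of the content is packaged in Lemma \ref{L6}, and the passage from $nu_s$ to $nu$ is a single monotonicity step. The only point that requires a moment's care is confirming that Corollary \ref{C7} genuinely supplies $\cl_\theta(\{x\})$ as a finitely non-Urysohn subset of $X$, so that its cardinality is controlled by the supremum defining $nu(X)$; once that is in hand the corollary follows in one line.
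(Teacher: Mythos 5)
Your proof is correct and matches the paper's intended argument exactly: the corollary is stated as an immediate consequence of Lemma \ref{L6} combined with the comparison $nu_s(X)\le nu(X)$, which the paper records (via Corollary \ref{C7}) in the remark following the definition of $nu_s(X)$. Nothing further is needed.
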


\begin{remark}
Let $\kappa>2^\omega$ and $X$ be a space with cardinality $\kappa$, equipped with the cofinite topology. Then  $s(X)=\omega$ and $\psi(X)=\kappa$. Hence $\psi(X)=\kappa>2^\omega=2^{\psi(X)}$. Also, for every $x\in X$ we have $\cl_\theta(\{x\})=X$. Thus $nu_s(X)=\kappa$. Therefore in the inequality in Lemma \ref{L6} the cardinal function $nu_s(X)$ cannot be replaced by 2, but we do not know if $nu_s(X)$ cannot be replaced by $nh(X)$. (Note that in our example $nh(X)=\kappa$, as well.)
\end{remark}

Now using Lemma \ref{L6} and Theorem \ref{THJ1} we generalize Theorem \ref{THJ2} as follows:

\begin{theorem}\label{TIG2}
If $X$ is a $T_1$-space then $|X|\le 2^{nu_s(X)\cdot 2^{s(X)}}$.
\end{theorem}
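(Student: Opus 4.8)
The plan is to combine the newly established Lemma~\ref{L6} with the Hajnal--Juh\'asz inequality stated in Theorem~\ref{THJ1}, which asserts that $|X|\le 2^{s(X)\psi(X)}$ for every $T_1$-space. The strategy is entirely substitutional: Lemma~\ref{L6} gives an upper bound on the pseudocharacter $\psi(X)$ in terms of the invariants $nu_s(X)$ and $s(X)$, and plugging this bound into the exponent of Theorem~\ref{THJ1} should immediately yield the desired inequality $|X|\le 2^{nu_s(X)\cdot 2^{s(X)}}$. So the real content of Theorem~\ref{TIG2} lives in Lemma~\ref{L6}, which has already been proved; what remains here is a careful cardinal-arithmetic bookkeeping of the exponents.

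First I would set $s(X)=\kappa$ and recall that by Theorem~\ref{THJ1} we have $|X|\le 2^{s(X)\psi(X)}=2^{\kappa\cdot\psi(X)}$. Next I would invoke Lemma~\ref{L6} to replace $\psi(X)$ by its upper bound $nu_s(X)\cdot 2^{\kappa}$, obtaining $|X|\le 2^{\kappa\cdot nu_s(X)\cdot 2^{\kappa}}$. The final step is to simplify the exponent $\kappa\cdot nu_s(X)\cdot 2^{\kappa}$. Since $\kappa\le 2^{\kappa}$, the factor $\kappa$ is absorbed into $2^{\kappa}$, so the exponent collapses to $nu_s(X)\cdot 2^{\kappa}=nu_s(X)\cdot 2^{s(X)}$, which is exactly what appears in the statement. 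This yields $|X|\le 2^{nu_s(X)\cdot 2^{s(X)}}$ and completes the argument.

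The only point requiring a moment's care is the absorption of the $\kappa$ factor in the exponent, where one uses the standard fact that for an infinite cardinal $\kappa$ we have $\kappa\cdot 2^{\kappa}=2^{\kappa}$ (since $\kappa\le 2^{\kappa}$ and both factors are at least $\kappa$). One should also confirm that $nu_s(X)$ is an infinite cardinal so that the product $nu_s(X)\cdot 2^{s(X)}$ behaves as expected; this follows from the convention throughout the paper that spaces are infinite, so that $nu_s(X)\ge nh(X)\ge 2$ and the relevant suprema are infinite cardinals. I do not anticipate any genuine obstacle: because Lemma~\ref{L6} and Theorem~\ref{THJ1} are both available, the proof is a one-line substitution followed by the elementary exponent simplification just described.
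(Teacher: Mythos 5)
Your proposal is correct and is essentially identical to the paper's own proof, which reads in one line: $|X|\le 2^{s(X)\psi(X)}\le 2^{s(X)\cdot nu_s(X)\cdot 2^{s(X)}} = 2^{nu_s(X)\cdot 2^{s(X)}}$, i.e., Theorem~\ref{THJ1} combined with Lemma~\ref{L6} and the absorption $s(X)\cdot 2^{s(X)}=2^{s(X)}$. Your extra care about the exponent arithmetic matches what the paper leaves implicit, so there is nothing to add or correct.
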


\begin{proof}
$|X|\le 2^{s(X)\psi(X)}\le 2^{s(X)\cdot nu_s(X)\cdot 2^{s(X)}} = 2^{nu_s(X)\cdot 2^{s(X)}}$.
\end{proof}

\begin{corollary}
If $X$ is a $T_1$-space such that $nu_s(X)\le 2^{s(X)}$ then $|X|\le 2^{2^{s(X)}}$.
\end{corollary}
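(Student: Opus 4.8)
The plan is to obtain this corollary as an immediate consequence of the general bound in Theorem~\ref{TIG2}, specializing the exponent under the additional hypothesis $nu_s(X)\le 2^{s(X)}$. All of the topological content has already been established upstream, so the argument should reduce to a single cardinal-arithmetic simplification.

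First I would invoke Theorem~\ref{TIG2}, which asserts that $|X|\le 2^{nu_s(X)\cdot 2^{s(X)}}$ for every $T_1$-space $X$. No further topological reasoning is required, since the passage from $\psi(X)$ to the product $nu_s(X)\cdot 2^{s(X)}$ was carried out in Lemma~\ref{L6}, and the bound $|X|\le 2^{s(X)\psi(X)}$ is Theorem~\ref{THJ1}. Thus the corollary is a pure specialization, not an independent estimate.

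The substantive step is then elementary cardinal arithmetic applied to the exponent. Under the hypothesis $nu_s(X)\le 2^{s(X)}$, the product in the exponent satisfies $nu_s(X)\cdot 2^{s(X)}\le 2^{s(X)}\cdot 2^{s(X)}$. Because $s(X)$ is infinite (by the standing convention $s(X)\ge\omega$), the quantity $2^{s(X)}$ is an infinite cardinal $\kappa$, and the standard absorption identity $\kappa\cdot\kappa=\kappa$ for infinite cardinals collapses the right-hand side back to $2^{s(X)}$. Monotonicity of the map $\mu\mapsto 2^{\mu}$ then gives the chain $|X|\le 2^{nu_s(X)\cdot 2^{s(X)}}\le 2^{2^{s(X)}}$, which is exactly the claimed inequality.

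I do not anticipate any genuine obstacle. The only point that deserves explicit mention is that $2^{s(X)}$ is infinite, which is what licenses the absorption $\kappa\cdot\kappa=\kappa$; without invoking the hypothesis $nu_s(X)\le 2^{s(X)}$ one could not replace the exponent $nu_s(X)\cdot 2^{s(X)}$ by $2^{s(X)}$, so it is precisely this hypothesis (together with the infinitude of $s(X)$) that is doing the work in reducing Theorem~\ref{TIG2} to the classical form $|X|\le 2^{2^{s(X)}}$.
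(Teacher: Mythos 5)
Your proposal is correct and matches the paper's (implicit) argument exactly: the paper states this as an immediate corollary of Theorem~\ref{TIG2}, with precisely the substitution $nu_s(X)\cdot 2^{s(X)}\le 2^{s(X)}\cdot 2^{s(X)}=2^{s(X)}$ that you carry out. Your explicit note that the absorption $\kappa\cdot\kappa=\kappa$ requires $2^{s(X)}$ to be infinite is a correct filling-in of the routine detail the paper leaves unstated.
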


\begin{corollary}
If $X$ is a $T_1$-space then $|X|\le 2^{nu(X)\cdot 2^{s(X)}}$.
\end{corollary}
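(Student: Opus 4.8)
The plan is to derive the bound by chaining together the two ingredients assembled immediately beforehand: the Hajnal--Juh\'asz inequality for $T_1$-spaces (Theorem \ref{THJ1}) and the new pseudocharacter estimate (Lemma \ref{L6}). The entire mathematical content has in fact been front-loaded into Lemma \ref{L6}, so what remains is to feed that estimate into the classical inequality and perform one step of cardinal arithmetic.

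First I would invoke Theorem \ref{THJ1}, which applies directly since $X$ is assumed to be $T_1$, to obtain the starting bound
\[
|X|\le 2^{s(X)\psi(X)}.
\]
Next I would replace $\psi(X)$ in the exponent by the upper bound furnished by Lemma \ref{L6}, namely $\psi(X)\le nu_s(X)\cdot 2^{s(X)}$, which again requires only the $T_1$ hypothesis. Since the map $\kappa\mapsto 2^\kappa$ is monotone, this substitution yields
\[
|X|\le 2^{s(X)\cdot nu_s(X)\cdot 2^{s(X)}}.
\]

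Finally I would simplify the exponent. Because $s(X)$ is an infinite cardinal we have $s(X)\le 2^{s(X)}$, and hence $s(X)\cdot 2^{s(X)}=2^{s(X)}$; absorbing the factor $s(X)$ collapses the exponent to $nu_s(X)\cdot 2^{s(X)}$, giving the claimed inequality $|X|\le 2^{nu_s(X)\cdot 2^{s(X)}}$.

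I do not expect a genuine obstacle at this stage: the nontrivial work, where the $T_1$ hypothesis and the cardinal function $nu_s(X)$ actually enter, is precisely the construction of the pseudobase in Lemma \ref{L6} (via \v{S}apirovski{\u\i}'s Lemma \ref{L4} and the behaviour of $\cl_\theta(\{x\})$). Once that estimate is in hand, Theorem \ref{TIG2} is a purely formal consequence, and the only point demanding any care is the elementary absorption identity $s(X)\cdot 2^{s(X)}=2^{s(X)}$ used to recover the clean exponent.
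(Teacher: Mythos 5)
Your chain of inequalities is correct as far as it goes, but it terminates at the wrong statement: what you establish is $|X|\le 2^{nu_s(X)\cdot 2^{s(X)}}$, which is Theorem \ref{TIG2}, whereas the corollary under consideration asserts $|X|\le 2^{nu(X)\cdot 2^{s(X)}}$, involving the non-Urysohn number $nu(X)$ rather than the non-Urysohn number for singletons $nu_s(X)$. These are different cardinal functions: $nu(X)$ is defined via arbitrary finitely non-Urysohn subsets of $X$, while $nu_s(X)$ is defined via the $\theta$-closures of singletons, and your write-up never mentions $nu(X)$ at all, so as written it does not prove the stated corollary.

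The repair is a one-line observation that the paper makes right after defining $nu_s$: by Corollary \ref{C7}, $\cl_\theta(\{x\})$ is itself a finitely non-Urysohn subset of $X$ containing $x$, so $nu_s(X)\le nu(X)$ for every space. Monotonicity of the exponent then converts your bound into the stated one, $|X|\le 2^{nu_s(X)\cdot 2^{s(X)}}\le 2^{nu(X)\cdot 2^{s(X)}}$. This is exactly how the paper obtains the corollary, namely as an immediate weakening of Theorem \ref{TIG2}, so with this step added your argument coincides with the intended proof (and in fact proves the stronger theorem along the way). Alternatively, you could avoid $nu_s$ entirely by substituting the corollary $\psi(X)\le nu(X)\cdot 2^{s(X)}$, stated immediately after Lemma \ref{L6}, into Theorem \ref{THJ1} and absorbing $s(X)$ as you did.
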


\begin{question}
Is it true that if $X$ is a $T_1$-space then $|X|\le 2^{nh(X)\cdot 2^{s(X)}}$?
\end{question}

\end{document}